\newtheorem{thm}{Theorem}[section]
\newtheorem{prop}[thm]{Proposition}
\newtheorem{lem}[thm]{Lemma}
\newtheorem{ass}[thm]{Assumption}
\theoremstyle{definition}
\newtheorem*{rem}{Remark}
\newenvironment{abs}{\begin{trivlist}\item[] {\bfseries Abstract.}}{\end{trivlist}}
\newenvironment{keywords}{\begin{trivlist}\item[] {\bfseries Keywords.}}{\end{trivlist}}
\newenvironment{AMS}{\begin{trivlist}\item[] {\bfseries {AMS subject classifications}.}}{\end{trivlist}}
\def\bmat{\begin{bmatrix}}
\def\emat{\end{bmatrix}}
\newcommand{\beq}[1]{\begin{equation}\label{#1}}
\def\eeq{\end{equation}}
\newcommand{\bit}{\begin{itemize}}
\newcommand{\eit}{\end{itemize}}
\newcommand{\bal}{\begin{align}}
\newcommand{\eal}{\end{align}}
\newcommand{\bma}{\left[\begin{array}{*{10}{r}}}
\newcommand{\ema}{\end{array}\right]}
\newcommand{\bmac}{\left[\begin{array}{*{10}{c}}}
\newcommand{\eps}{\varepsilon}
\newcommand{\vet}{\tilde{v}^{\eps}_t}
\newcommand{\yet}{y_t^{\eps}}
\newcommand{\intinf}{\int_{-\infty}^{\infty}}
\newcommand{\id}{\operatorname{Id}}
\DeclareFontFamily{U}{msb}{}
\DeclareFontShape{U}{msb}{m}{n}{ <5> <6> <7> <8> <9> gen * msbm
       <10> <10.95> <12> <14.4> <17.28> <20.74> <42.88> msbm10}{}
\DeclareSymbolFont{AMSb}{U}{msb}{m}{n}
\DeclareSymbolFontAlphabet{\mathbb}{AMSb}
\newcommand{\R}{\mathbb{R}}
\title{\bf{Front Propagation in Stochastic Neural Fields: A Rigorous Mathematical Framework}\Footnotetext{}{Date: \today}}
\author{\small{\uppercase{Jennifer Kr\"uger}}\footnote{Mathematisches Institut, Technische Universit\"at Berlin, 10587 Berlin, Germany (jkrueger@math.tu-berlin.de)}, \;\; \small{\uppercase{Wilhelm Stannat}}\footnote{Mathematisches Institut and Bernstein Center for Computational Neuroscience, Technische Universit\"at Berlin, 10587 Berlin, Germany (stannat@math.tu-berlin.de). The work of this author was supported by the BMBF, FKZ01GQ1001B. }}
\date{}
\begin{document}
\addtocounter{footnote}{1}
\maketitle

\begin{abs}
We develop a complete and rigorous mathematical framework for the analysis of stochastic neural field equations under the influence of spatially extended additive noise. By comparing a solution to a fixed deterministic front profile it is possible to realise the difference as strong solution to an $L^2(\R)$-valued SDE. A multiscale analysis of this process then allows us to obtain rigorous stability results. Here a new representation formula for stochastic convolutions in the semigroup approach to linear function-valued SDE with adapted random drift is applied.  Additionally, we introduce a dynamic phase-adaption process of gradient type. 
\end{abs}

\begin{keywords} travelling fronts, stochastic convolution, additive noise, strong solution, dynamic phase-adaption, stability
\end{keywords}
\begin{AMS}
60H20, 60H25, 92C20
\end{AMS}

\section{Introduction}
Neural field equations are used to model the spatiotemporal evolution of  neural activity in thin layers of cortical tissue from a macroscopic point of view. 
Under the influence of spatially extended additive noise $\xi$ they can be formulated as
\begin{align}\label{nfe}
\partial_t u(x,t)=-u(x,t)+\intinf w(x-y)\ F(u(y,t)) \ dy + \sqrt{\eps}\ \dot{\xi}(x,t)
\end{align}
where $u$ is meant to describe the activity of a neuron at position $x$ and time $t$. The probability kernel $w$ models the strength of nonlocal excitatory synaptic connections, whereas $F: \R \to \R$ constitutes a nonlinear firing rate function. A precise interpretation of the stochastic forcing term $\xi$, characterising extrinsic fluctuations of the system, will be given below.
Several approaches (e.g. in \cite{Baladron}, \cite{Jirsa} and \cite{Touboul}) have been developed to derive neural field equations as continuum limit of spatially extended, synaptically coupled neural systems under different assumptions on the underlying microscopic network structure. 
Here, the particular importance of a macroscopic perspective on neural activity primarily arises from the capability of field equations like \eqref{nfe} to model a wide range of neurophysiological phenomena.
In the deterministic theory ($\eps=0$) Amari first provided a complete taxonomy of pattern dynamics exhibited by such fields (cf. \cite{Amari}), which includes the propagation of activity in form of travelling waves. For the simplified case that each neuron only switches between a spiking and a non-spiking state, i.e. that the firing rate function $F$ is given by a Heaviside nonlinearity, even an explicit travelling wave solution of \eqref{nfe} can be constructed (cf. \cite{Bressloff}), whereas for more general input data $w$ and $F$ sufficient criteria for the existence and uniqueness of such solutions have been formulated in \cite{Ermentrout} resp. \cite{Chen}. Extending the deterministic field equation to scalar stochastic neural fields, the objective of Bressloff and Webber in \cite{Bressloff} was to investigate the effect of spatially extended, extrinsic stochastic perturbations on such travelling wave dynamics. As it turned out, random forcing terms result in two distinct structural phenomena: `fast' perturbations of the front shape as well as a `slow' horizontal displacement of the wave profile from its uniformly translating position. 
Under these observations a separation of time-scales method was applied to solve a neural field equation (subject to multiplicative noise) by decomposing the solution into a fixed deterministic front profile, a diffusion-like horizontal translation process as well as time- and space-dependent fluctuations. However, as the authors of \cite{Bressloff} themselves state, they ``base [their] analysis on formal perturbation methods developed previously for PDEs, since a rigorous mathematical framework is currently lacking''.
\smallskip

The purpose of our paper therefore is to introduce such a complete and mathematically rigorous framework, which qualitatively captures the above phenomena and - by comparing a solution of \eqref{nfe} to a fixed reference profile - 
allows us to realise a stochastic neural field as stochastic evolution equation on a suitable function space (for the underlying theory refer to \cite{DaPrato}, \cite{Prevot}). Here, $\xi$ will be chosen as $Q$-Wiener process on the Hilbert space $L^2(\R)$. In contrast to the more direct rigorous approach recently suggested in \cite{Faugeras}, where a stochastic neural field is interpreted as SDE on a suitable weighted $L^2$-space, our approach via decomposition
has the advantage of providing information on the precise structure of solutions and even more, allows us to prove new stability results.

\smallskip 

The paper is structured as follows: Section \ref{sec:2} provides a brief introduction to the representation of stochastic travelling waves as developed in \cite{Bressloff}. These ideas are then given rigorous meaning by the decompositions presented in Section \ref{sec:3}. As part of this framework, in Subsection \ref{sec:4} we introduce a gradient-descent-type ODE for a dynamic phase-adaption and prove existence and uniqueness of a classical solution to this ODE in Theorem \ref{prop:ode}. Moreover, the fluctuations are further specified by a suitable decomposition into processes of different order, namely into an Ornstein-Uhlenbeck-like process satisfying a linear SDE with non-autonomous adopted random drift as well as a lower-order differentiable remainder process on $L^2(\R)$. As a consequence of the randomness of the linear drift term it is not possible to directly define a stochastic convolution of the (possibly non-adapted) semigroup with the driving Wiener process as required for a mild-solution concept. We therefore represent the Ornstein-Uhlenbeck process via a stochastic convolution of new type (recently introduced in \cite{Pronk}) that is applicable to general adapted random drifts and allows us to derive a locally uniform (in time) pathwise control on the Ornstein-Uhlenbeck component. A similar approach based on a pathwise control of the stochastic convolution has also been taken in \cite{StannatElSarhir}. Section \ref{sec:stab} then contains our main results, Theorem \ref{prop:y} and Theorem \ref{thm:stab}, on the stability of stochastic travelling waves. The crucial assumption to obtain this type of stochastic stability is the existence of a spectral gap of the time-dependent random drift. 
\smallskip 

An interesting issue for future research would be to investigate the underlying statistics of the dynamic wave speed as well as the Ornstein-Uhlenbeck component of the fluctuations, which, unlike in the case of deterministic associated drift operators, cannot be expected to be a Gaussian process. Moreover, our hope is that the presented approach can be carried over to spatially discrete models in order to examine the stochastic stability of fluctuating travelling waves in discrete neural networks.

\section{Phenomenological Motivation}\label{sec:2}
Interpreting \eqref{nfe} as stochastic evolution equation on a suitable function space we now analyse the following nonlinear, scalar neural field equation
\begin{align}\label{eq:u}
\begin{cases} 
du_t&= [-u_t+w\ast F(u_t)]\ dt + \sqrt{\eps} \ dW_t^Q,\\
u(0)&=u_0
\end{cases}
\end{align}

\noindent where the additive stochastic forcing term is given by a  $Q$-Wiener process $W^Q$ on $L^2(\R)$ (for a detailed introduction of $Q$-Wiener processes on general Hilbert spaces refer to \cite{DaPrato}). Let the weight kernel $w$ be a bounded probability density function, i.e. in particular
\[
\int_{\R} w(x)\ dx = 1
\]
and let the nonlinear gain function $F$ be in $C_b^2(\R)$, i.e. twice continuously differentiable with bounded derivatives.

Even though \cite{Bressloff} examined the effect of extrinsic multiplicative noise on the above field equation, the structural separation of the dynamics on different time-scales is also adoptable to the setting of additive extrinsic forcing terms as assumed in \eqref{eq:u}. Basically, the analysis in \cite{Bressloff} splits up the dynamics into its behaviour on short and long time-scales, i.e. the process is 
represented via a ``slow'' diffusive-like displacement of the front from its uniformly translating position and ``fast'' fluctuations in the front profile. More precisely, a fixed deterministic wave profile $U_0$ with uniformly translating position $\xi=x-ct$ is horizontally displaced by $\Delta(t)$, a diffusive-like stochastic process on the real line. Already taking account of the correct order w.r.t. $\eps$ the ansatz in \cite{Bressloff} is to solve \eqref{eq:u} by decomposing
\begin{align}\label{2.18}
u(x,t)=U_0(\xi-\Delta(t))+\sqrt{\eps}\ \Phi(\xi-\Delta(t),t),
\end{align}

\noindent where $U_0$ is a travelling wave solution of \eqref{eq:u} in the deterministic case $\eps=0$
and $\Phi$ can \mbox{- at least formally -} be derived as a process on $L^2(\R)$. For more detailed insights into the nature of these processes as well as the applied methodology we refer to the original paper. In the following sections this phenomenological motivation is translated into a rigorous mathematical analysis, which will partly proceed along lines of a similar approach having been developed in the context of stochastic reaction diffusion equations (cf. \cite{Stannat}, \cite{Veraar}).
\section{Mathematical Modelling on $L^2(\R)$}\label{sec:3}

In the sequel assume the input data $w$ and $F$ to allow for a unique travelling wave solution $\hat{u}$ with intrinsic wave speed $c$ solving the deterministic neural field equation \eqref{eq:u} in the case $\eps=0$. In addition, the wave profile should connect two stable fixed points 0 and 1 of the dynamics, i.e. we impose the boundary conditions $\lim_{\xi\to\infty}\hat{u}(\xi)=1, \; \lim_{\xi\to-\infty}\hat{u}(\xi)=0$. Sufficient criteria on $w$ and $F$ ensuring the existence and uniqueness of such solutions are stated in \cite{Ermentrout} resp. \cite{Chen}.

\subsection{Decomposition of the solution}
In analogy with \cite{Bressloff} we decompose the solution of the stochastic neural field \eqref{eq:u} into 
\begin{align}\label{eq:dec}
u_t= \hat{u}_t+v_t,
\end{align}

\noindent with $\hat{u}_t=\hat{u}(\cdot-ct)$ being a travelling wave solution of the deterministic ODE 
\begin{align}\label{eq:tw}
-c\hat{u}_x&= -\hat{u}+w\ast F(\hat{u})
\end{align}

\noindent Since by classical calculus $\hat{u}_t$ is a solution of
\begin{align}\notag
d\hat{u}_t(x) &= \partial_t \hat{u}(x-ct)\ dt= -c \hat{u}_x(x-ct) \ dt\\
&= \left[-\hat{u}_t(x)+ \left(w \ast F(\hat{u}_t)\right)(x)\right] dt
\end{align}
$v_t$ satisfies the following evolution equation
\begin{align}\notag
dv_t&=du_t-d\hat{u}_t\\ 
&=[-v_t+w\ast \left(F(v_t+\hat{u}_t)-F(\hat{u}_t)\right)]\ dt +\sqrt{\eps} dW_t^Q \label{eq:v1}
\end{align}

\noindent Expanding to first order yields
\begin{align}\label{eq:v2}
dv_t= [-v_t+w\ast \left(F'(\hat{u}_t)\ v_t\right)]\ dt + R_t(\hat{u}_t,v_t) \ dt +\sqrt{\eps} dW_t^Q,
\end{align}

\noindent where the remainder is given by
\begin{align}\label{remainder}
R_t(\hat{u}_t, v_t)&= w\ast\left(\frac 12 F''(\xi(\hat{u}_t,v_t))\  v_t^2\right)
\end{align}

\noindent with $\xi(\hat{u}_t,v_t)$ denoting an intermediate point between $\hat{u}_t$ and $v_t$.
For a given travelling wave solution $\hat{u}$ the remainder $R_t(\hat{u}_t, \cdot)$ is indeed a well-defined map on $L^2(\R)$ satisfying the estimate

\begin{align}\label{est:rem} 
\Vert R_t(\hat{u}_t,v_t)\Vert_{L^2}^2
&\leq c \int_{\R}\left( \int_{\R}  w(x-y)\ v_t^2(y) \ dy\right)^2  dx= \tilde{c} \ \Vert v_t\Vert_{L^2}^4.
\end{align}

\begin{thm}\label{thm:v}
For $v_0\in L^2(\R)$ equation \eqref{eq:v1} has a unique mild solution $v$ with $v \in L^{\infty}([0,T]; L^2(\R))$ almost surely. This solution is also a strong solution and admits a continuous modification.
\end{thm}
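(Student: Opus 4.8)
The plan is to recognise that, once $\hat u$ is fixed, equation \eqref{eq:v1} is a semilinear stochastic evolution equation on $H=L^2(\R)$ with a \emph{bounded} generator and a \emph{globally Lipschitz}, linearly growing nonlinearity, so that the classical fixed-point theory for stochastic evolution equations (as in \cite{DaPrato}, \cite{Prevot}) applies more or less as a template. Writing $A:=-\Id$, which generates the contraction semigroup $S(t)=e^{-t}\Id$, and $B_t(v):=w\ast\bigl(F(v+\hat u_t)-F(\hat u_t)\bigr)$, equation \eqref{eq:v1} becomes $dv_t=[Av_t+B_t(v_t)]\,dt+\sqrt\eps\,dW_t^Q$. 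The first step is to verify that $B$ is well-defined and globally Lipschitz on $H$, uniformly in $t\in[0,T]$: although $\hat u_t\notin L^2(\R)$, the mean value theorem gives the \emph{pointwise} bound $|F(v+\hat u_t)-F(\tilde v+\hat u_t)|\le\|F'\|_\infty\,|v-\tilde v|$ (here $F\in C_b^2(\R)$ enters), and since $w\ge 0$ with $\int_\R w=1$, Young's inequality yields $\|w\ast g\|_{L^2}\le\|w\|_{L^1}\|g\|_{L^2}=\|g\|_{L^2}$; hence $B_t\colon L^2(\R)\to L^2(\R)$ is well-defined with $\|B_t(v)-B_t(\tilde v)\|_{L^2}\le\|F'\|_\infty\|v-\tilde v\|_{L^2}$ and $B_t(0)=0$, so it also has linear growth. (Note that the quadratic estimate \eqref{est:rem} is not needed here, since we work with \eqref{eq:v1} and do not expand.)

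Next I would establish existence and uniqueness of the mild solution
\[
v_t=S(t)v_0+\int_0^t S(t-s)B_s(v_s)\,ds+\sqrt\eps\int_0^t S(t-s)\,dW_s^Q
\]
by a contraction argument in the Banach space of $H$-valued predictable processes equipped with the equivalent weighted norm $\|v\|_\lambda:=\bigl(\sup_{t\le T}e^{-\lambda t}\,\mathbb{E}\|v_t\|_{L^2}^2\bigr)^{1/2}$ for $\lambda$ large: the contractivity of $S$ together with the Lipschitz bound on $B$ makes the solution map a strict contraction, so Banach's fixed point theorem gives a unique mild solution in this class. The additive stochastic convolution $W_A(t):=\sqrt\eps\int_0^t S(t-s)\,dW_s^Q$ is here simply the $L^2(\R)$-valued Ornstein--Uhlenbeck process solving $dW_A=AW_A\,dt+\sqrt\eps\,dW_t^Q$; since $A$ is bounded there is no regularity issue, and by the standard theory for additive noise (equivalently the factorization method) it has a continuous modification with $\mathbb{E}\sup_{t\le T}\|W_A(t)\|_{L^2}^2<\infty$, in particular $W_A\in L^\infty([0,T];L^2(\R))$ a.s.

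For the pathwise assertions I would pass to $z_t:=v_t-W_A(t)$, which satisfies, for a.e.\ $\omega$, the deterministic integral equation $z_t=S(t)v_0+\int_0^t S(t-s)\,B_s\bigl(z_s+W_A(s)\bigr)\,ds$. Since $s\mapsto W_A(s)(\omega)$ is continuous on $[0,T]$ and $\zeta\mapsto B_s(\zeta+W_A(s)(\omega))$ is Lipschitz uniformly in $s$, a deterministic Picard iteration in $C([0,T];H)$ together with Gronwall's lemma yields a unique continuous solution $z(\omega)$ with $\sup_{t\le T}\|z_t\|_{L^2}\le\bigl(\|v_0\|_{L^2}+C_T\sup_{s\le T}\|W_A(s)\|_{L^2}\bigr)e^{\|F'\|_\infty T}$. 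Hence $v=W_A+z$ admits a continuous modification and lies in $L^\infty([0,T];L^2(\R))$ a.s., and by the uniqueness from the preceding step it coincides with the mild solution. Finally, because $A=-\Id$ is bounded one has $D(A)=H$ and $\int_0^T\|Av_s\|_{L^2}\,ds=\int_0^T\|v_s\|_{L^2}\,ds<\infty$ a.s. (using the $L^\infty$-bound just obtained), so the standard equivalence between mild and strong solutions — via $S(t)=\Id+\int_0^t AS(r)\,dr$ and a stochastic Fubini argument — shows that $v$ is also a strong solution of \eqref{eq:v1}.

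I do not expect a serious obstacle: with a bounded generator and a globally Lipschitz, linearly growing nonlinearity this is essentially a routine application of the general theory. The two points that genuinely require care are (i) the verification that the convolution--composition nonlinearity $B_t$ is globally Lipschitz on $L^2(\R)$ despite $\hat u_t\notin L^2(\R)$ — this is precisely where $w\in L^1(\R)$ being a probability density and $F'$ being bounded are used, through Young's inequality and a pointwise Lipschitz estimate — and (ii) the transfer from the $L^2(\Omega)$-valued mild solution to the almost-sure statements (continuous modification and the $L^\infty$-in-time bound), for which the decomposition $v=W_A+z$ into the Ornstein--Uhlenbeck convolution plus a pathwise-differentiable remainder is the cleanest device.
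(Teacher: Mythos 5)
Your proposal is correct and follows essentially the same route as the paper: both reduce \eqref{eq:v1} to a semilinear equation with a globally Lipschitz, linearly growing drift on $L^2(\R)$ (using that $w$ is an $L^1$ probability density and $F'$ is bounded) and invoke the standard fixed-point theory of \cite[Theorem 7.4]{DaPrato}. The only difference is cosmetic: the paper absorbs the term $-v$ into the nonlinearity and takes the linear part $A=0$, so the mild solution is immediately the integral identity \eqref{mil} and the strong-solution claim is automatic, whereas your choice $A=-\Id$ requires the (routine) bounded-generator equivalence of mild and strong solutions at the end.
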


\begin{proof}
For proving existence of a unique mild solution we define
\[
B(t,v):= -v+w \ast \left( F(v+\hat{u}_t)-F(\hat{u}_t)\right).
\]
Applying Jensen's inequality we obtain that $B$ is well-defined as a map $[0,T]\times L^2(\R)\to L^2(\R)$ satisfying the following estimate
\begin{align*}
\Vert B(t,v)\Vert_{L^2(\R)}^2
&\leq  2\Vert v\Vert_{L^2(\R)}^2+ 2\Vert F'\Vert_{\infty}^2\ \Vert v\Vert^2_{L^2(\R)}. 
\end{align*}

\noindent For a given topological space $X$ let $\mathcal{B}(X)$ denote the Borelian $\sigma$-algebra on $X$. Then, $B$ is obviously measurable from the measurable space
${([0,T]\times L^2(\R),\ \mathcal{B}([0,T])\otimes \mathcal{B}(L^2(\R)))}$ to $(L^2(\R),\ \mathcal{B}(L^2(\R)))$.
\medskip

\noindent As one can easily verify for $v_1, v_2\in L^2(\R), \ t\in[0,T]$, we obtain the Lipschitz property
\begin{align*}
\Vert B(t,v_1)-B(t,v_2)\Vert_{L^2(\R)}^2
\leq 2\ (1+  \Vert F'\Vert_{\infty}^2)\ \Vert v_1-v_2\Vert_{L^2(\R)}^2,
\end{align*}
which, together with the property 
\[
B(t,0)=w\ast \left(F(0+\hat{u}_t)-F(\hat{u}_t)\right) = 0,
\]
directly implies a linear growth condition for $B$. 
Considering \eqref{eq:v1} as semilinear evolution equation with linear part $A v_t$, $A=0$, \cite[Theorem 7.4]{DaPrato} provides the existence of a unique mild solution
$v\in L^{\infty}([0,T], L^2(\R))$ a.s., which allows for a continuous modification and can be represented as

\begin{align}\label{mil}
v_t= v_0+\int_0^t B(s, v_s) \ ds + \sqrt{\eps} W_t^Q.
\end{align}
This immediately shows that $v$ is also a strong solution of \eqref{eq:v1}.
\end{proof}

It is clear from the above that if the initial condition $u_0$ of problem \eqref{eq:u} is chosen such that $v_0:= u_0-\hat{u}_0 \in L^2(\R)$, then $u_t:=\hat{u}_t+v_t$ is a solution of \eqref{eq:u}. 
Uniqueness of the components only holds after we have fixed an initial phase for the travelling wave solution $\hat{u}_t$.

\subsection{Gradient-descent type ODE for random phase-shift}\label{sec:4}
The intention of this subsection is to find a suitable derivation of a slow horizontal translation process $(C(t))_{t\in[0,T]}$, as mentioned in Section \ref{sec:2}, such that the initial representation $u_t=\hat{u}_t + v_t$ can be replaced by 

\begin{align}
u_t=\hat{u}(\cdot-ct-C(t))+\tilde{v}_t,
\end{align}
where $\tilde{v}_t$ denotes a new $L^2$-component. The phase-shift process $(C(t))$ should have the effect of dynamically matching the deterministic profile $\hat{u}$ with the stochastic travelling wave in the sense of minimising the $L^2$-distance between the solution $u$ and all possible translations of $\hat{u}$, for which reason (as already proposed in \cite{Thuemmler},  \cite{Stannat}) we consider the gradient-descent-type (pathwise) ODE
 
\begin{align}\label{gradientdesc}
\begin{cases}
\dot{C}(t) = -m\ \langle \hat{u}_x(\cdot-ct-C(t)), u(t,\cdot)-\hat{u}(\cdot -ct-C(t))\rangle_{L^2}\\
C(0)=0
\end{cases}
\end{align}
with relaxation rate $m>0$. This phase-adaption can be seen as an alternative approach to the phase conditions specified by certain algebraic constraints in the classical stability analysis (refer to \cite{Henry}).
Existence and uniqueness of a classical global solution to the ODE \eqref{gradientdesc} are proven under the 
following assumptions on the weight kernel:

\begin{ass}\label{ass:w}
\begin{enumerate}[(a)]
\item $w$ is piecewise continuously differentiable 
\item $w$ satisfies
\[
\int_{\R} \frac{w_x^2}{w}(x)\ dx < \infty \hspace{10cm}\hphantom{4}
\]
\end{enumerate}

\end{ass}

\vspace{0.3cm}

\begin{rem}\label{ass:lipgrad}

\begin{enumerate}[(i)]
\item 
Note that Assumption \ref{ass:w}(b) ensures that in the case $c=0$ the travelling wave satisfies $\hat{u}_{xx}\in L^2(\R)$, which is used in the proof of Lemma \ref{lem:lipgrad}(ii). In the case $c\neq 0$ part (b) of the above assumption is not needed.
\item The above conditions are satisfied for the exponential weight function ${w(x) = \frac{1}{2\sigma}e^{-\frac{\vert x \vert}{\sigma}}}$ as well as for a Gaussian kernel $w(x)= \frac{1}{\sqrt{2\pi\sigma^2}}\ e^{-\frac{x^2}{2\sigma^2}}$, which are both common choices in modelling synaptic excitatory connections.
\end{enumerate}
\end{rem}

\begin{lem}
For bounded weight kernel $w$ and $F\in C_b^1(\R)$ we obtain
\[\hat{u}_x \in L^2(\R).\]
\end{lem}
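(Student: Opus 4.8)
The plan is to show $\hat u_x\in L^1(\R)\cap L^\infty(\R)$ and then to conclude by the elementary embedding $L^1(\R)\cap L^\infty(\R)\subset L^2(\R)$, which follows from $\int_\R f^2\,dx\le\|f\|_{L^\infty(\R)}\,\|f\|_{L^1(\R)}$.

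For the $L^1$-bound I would exploit that the profile $\hat u$ furnished by the existence and uniqueness results of \cite{Ermentrout} resp. \cite{Chen} is monotone increasing, so that $\hat u_x\ge0$; together with the imposed boundary conditions $\lim_{\xi\to-\infty}\hat u(\xi)=0$ and $\lim_{\xi\to+\infty}\hat u(\xi)=1$ this gives
\[
\int_\R|\hat u_x(\xi)|\,d\xi=\int_\R\hat u_x(\xi)\,d\xi=\lim_{\xi\to+\infty}\hat u(\xi)-\lim_{\xi\to-\infty}\hat u(\xi)=1,
\]
hence $\hat u_x\in L^1(\R)$.

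For the $L^\infty$-bound I would distinguish the cases $c\neq0$ and $c=0$. If $c\neq0$, the travelling wave equation \eqref{eq:tw} can be solved for the derivative, $\hat u_x=c^{-1}\bigl(\hat u-w\ast F(\hat u)\bigr)$; since $\hat u$ is continuous with finite limits it is bounded, so $F(\hat u)\in L^\infty(\R)$, and as $w\in L^1(\R)$ Young's convolution inequality gives $w\ast F(\hat u)\in L^\infty(\R)$, whence $\hat u_x\in L^\infty(\R)$. If $c=0$, \eqref{eq:tw} reduces to $\hat u=w\ast F(\hat u)$; differentiating under the convolution (legitimate since $\hat u\in C^1$ and $w\in L^1(\R)$) gives $\hat u_x=w\ast\bigl(F'(\hat u)\,\hat u_x\bigr)$, and now $F'(\hat u)\hat u_x\in L^1(\R)$ because $F\in C_b^1(\R)$ and $\hat u_x\in L^1(\R)$ by the previous step, so that — using here precisely the boundedness hypothesis $w\in L^\infty(\R)$ — Young's inequality yields $\hat u_x=w\ast(F'(\hat u)\hat u_x)\in L^\infty(\R)$. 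In either case $\hat u_x\in L^1(\R)\cap L^\infty(\R)$, and the lemma follows.

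The one genuinely non-routine point is the integrability of $\hat u_x$ at infinity: it cannot be read off directly from $\hat u_x=c^{-1}(\hat u-w\ast F(\hat u))$, because $\hat u$ itself is not in $L^2(\R)$ (it tends to $1$, not $0$). The decay has to be extracted from structural information about the front, and monotonicity does this cleanly by delivering the $L^1$-bound essentially for free; without it one would be forced to invoke quantitative exponential tail estimates for $\hat u-\hat u(\pm\infty)$, which are not among the hypotheses of this lemma.
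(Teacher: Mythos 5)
Your proof is correct, but it reaches the conclusion by a genuinely different route, at least in the case $c\neq 0$. You establish $\hat u_x\in L^1(\R)\cap L^\infty(\R)$ and interpolate, taking the $L^1$ bound from monotonicity of the front and the $L^\infty$ bound from the pointwise identity $\hat u_x=c^{-1}(\hat u-w\ast F(\hat u))$ (resp.\ $\hat u_x=w\ast(F'(\hat u)\hat u_x)$ when $c=0$). The paper instead, for $c\neq 0$, multiplies \eqref{eq:tw} by $\hat u_x$ to obtain $\hat u_x^2=\frac1c\hat u\,\hat u_x-\frac1c(w\ast F(\hat u))\hat u_x$ and integrates over $[y,z]$, bounding $\int_y^z\hat u_x^2$ by boundary terms that remain finite as $y\to-\infty$, $z\to\infty$; for $c=0$ it expands $\Vert w\ast(F'(\hat u)\hat u_x)\Vert_{L^2}^2$ as a triple integral and bounds it by $\Vert w\Vert_\infty\Vert F'\Vert_\infty^2\Vert\hat u_x\Vert_{L^1}^2$, which is exactly the estimate your $L^1$--$L^\infty$ interpolation reproduces. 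Note that the paper's argument is not free of the structural input you single out as the non-routine point: its bound $\int_y^z(w\ast F(\hat u))\hat u_x\leq\Vert w\ast F(\hat u)\Vert_\infty(\hat u(z)+\hat u(y))$ and its remark that ``trivially'' $\hat u_x\in L^1(\R)$ both presuppose $\hat u_x\geq 0$, i.e.\ monotonicity of the front, so you are making explicit a hypothesis the paper leaves implicit. Your version buys uniformity (one interpolation inequality handles both cases once boundedness of $\hat u_x$ is in hand) and shows that for $c\neq 0$ the boundedness of $w$ is not actually needed; the paper's $c\neq 0$ computation buys an explicit bound on $\Vert\hat u_x\Vert_{L^2}^2$ in terms of $|c|$ and $\Vert w\ast F(\hat u)\Vert_\infty$ without passing through $\Vert\hat u_x\Vert_\infty$. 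The one step you justify only in passing --- differentiating $\hat u=w\ast F(\hat u)$ under the convolution when $c=0$ --- is shared with the paper, which invokes Assumption \ref{ass:w}(b) there even though the lemma as stated assumes only that $w$ is bounded.
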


\begin{proof}
In the case $c\neq 0$ multiplying \eqref{eq:tw} with the gradient $\hat{u}_x$ yields
\[
\hat{u}_x^2= \frac 1c \hat{u}\ \hat{u}_x-\frac 1 c\left( w\ast F(\hat{u})\right)\hat{u}_x
\]
such that by integration by parts for $y,z\in\R$
\begin{align*}
\int_y^z \hat{u}_x^2(r) dr &\leq \frac 1{2 \vert c\vert}\left(\hat{u}^2(z)+\hat{u}^2(y)\right) + \frac 1 {\vert c\vert} \Vert w\ast F(\hat{u})\Vert_{\infty} (\hat{u}(z)+ \hat{u}(y))\\
&\longrightarrow \frac 1{2 \vert c\vert}+ \frac 1 {\vert c\vert} \Vert w\ast F(\hat{u})\Vert_{\infty} \;\;\text{ as }y\to-\infty, \; z\to\infty.
\end{align*}
Here, the boundary conditions of $\hat{u}$ were applied.
\medskip

\noindent In the case $c=0$ \eqref{eq:tw} yields $\hat{u}=w\ast F(\hat{u})$
and under Assumption \ref{ass:w} (b) one obtains
\[
\hat{u}_x= w\ast F'(\hat{u})\hat{u}_x.
\]

\noindent Note that trivially for all $c\in\R$ we have $\hat{u}_x \in L^1(\R)$. This suffices to obtain
\begin{align*}
\Vert \hat{u}_x \Vert^2_{L^2(\R)} &= \int \Big{\vert} \int w(x-y)F'(\hat{u}(y))\hat{u}_x(y) \ dy \Big{\vert}^2 dx\\
&= \int\int\int w(x-y) w(x-\tilde{y}) F'(\hat{u}(y)) F'(\hat{u}(\tilde{y})) \hat{u}_x(y) \hat{u}_x(\tilde{y}) \ dy d\tilde{y} dx\\
&\leq \left(\sup_{y, \tilde{y}} \int w(x-y)w(x-\tilde{y})\ dx\right)
\Vert F'\Vert_{\infty}^2 \Vert \hat{u}_x\Vert_{L^1(\R)}^2\\
&\leq \Vert w\Vert_{\infty} \Vert F'\Vert_{\infty}^2 \Vert \hat{u}_x\Vert_{L^1(\R)}^2
\end{align*}
\end{proof}

\noindent With the above considerations the following Lipschitz properties for $\hat{u}$ as well as $\hat{u}_x$ are obtained:
\begin{lem}\label{lem:lipgrad}
Given Assumption \ref{ass:lipgrad} let $C_1, C_2\in\R$. Then 
\begin{enumerate}[(i)]
\item 
\[
\Vert \hat{u}(\cdot-C_1)-\hat{u}(\cdot-C_2)\Vert_{L^2} \leq \Vert \hat{u}_x \Vert_{L^2}\ \vert C_1-C_2\vert 
\]
\item There exists a constant $\tilde{c}>0$ such that
\[
\Vert \hat{u}_x(\cdot-C_1)-\hat{u}_x(\cdot-C_2)\Vert_{L^2} \leq \tilde{c}\ \vert C_1-C_2\vert
\]
\end{enumerate}
\end{lem}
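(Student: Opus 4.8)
The strategy for both parts is the same: represent a shifted profile (respectively, its derivative) as an integral of the next derivative over the shift parameter, and then interchange the $L^2(\R)$-norm with this integral via Minkowski's integral inequality, using that the $L^2$-norm is translation invariant. For (i), for a.e.\ $x$ one has $\hat{u}(x-C_1)-\hat{u}(x-C_2)=-\int\hat{u}_x(x-s)\,ds$ with $s$ running between $C_2$ and $C_1$; taking norms gives
\[
\Vert\hat{u}(\cdot-C_1)-\hat{u}(\cdot-C_2)\Vert_{L^2}\le\Big|\int\Vert\hat{u}_x(\cdot-s)\Vert_{L^2}\,ds\Big|=|C_1-C_2|\,\Vert\hat{u}_x\Vert_{L^2},
\]
which is finite by the preceding lemma, and this is already all of (i). By exactly the same computation, applied now to $\hat{u}_x(\cdot-C_1)-\hat{u}_x(\cdot-C_2)=-\int\hat{u}_{xx}(\cdot-s)\,ds$, part (ii) reduces to the single statement $\hat{u}_{xx}\in L^2(\R)$, and then $\tilde{c}:=\Vert\hat{u}_{xx}\Vert_{L^2}$ does the job.

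Establishing $\hat{u}_{xx}\in L^2(\R)$ splits according to the wave speed. If $c\neq 0$, differentiating the profile equation \eqref{eq:tw}, with the derivative falling on the smooth factor of the convolution, gives $c\,\hat{u}_{xx}=\hat{u}_x-w\ast\!\big(F'(\hat{u})\,\hat{u}_x\big)$; since $w$ is a probability density, Young's inequality yields $\Vert w\ast(F'(\hat{u})\hat{u}_x)\Vert_{L^2}\le\Vert F'\Vert_\infty\,\Vert\hat{u}_x\Vert_{L^2}$, so $\hat{u}_{xx}\in L^2(\R)$ with $\Vert\hat{u}_{xx}\Vert_{L^2}\le|c|^{-1}\big(1+\Vert F'\Vert_\infty\big)\Vert\hat{u}_x\Vert_{L^2}$, the right-hand side being finite by the preceding lemma. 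If $c=0$, then \eqref{eq:tw} gives $\hat{u}_x=w\ast g$ with $g:=F'(\hat{u})\,\hat{u}_x$, and since $w$ is continuous and piecewise $C^1$ (Assumption \ref{ass:w}(a)) it is absolutely continuous, so $\hat{u}_{xx}=w_x\ast g$. Here $g\in L^1(\R)\cap L^\infty(\R)$: it lies in $L^1$ as noted in the proof of the preceding lemma, and it is bounded because $\hat{u}_x=w\ast g$ satisfies $\Vert\hat{u}_x\Vert_\infty\le\Vert w\Vert_\infty\Vert g\Vert_{L^1}$ while $\Vert F'\Vert_\infty<\infty$.

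It remains to bound $\Vert w_x\ast g\Vert_{L^2}$, which is the crux of the lemma and the only place Assumption \ref{ass:w}(b) enters. Writing $w_x(x-y)=\frac{w_x(x-y)}{\sqrt{w(x-y)}}\sqrt{w(x-y)}$ and applying the Cauchy--Schwarz inequality in the $y$-integral,
\[
|\hat{u}_{xx}(x)|^2\le\Big(\int_\R\frac{w_x^2(x-y)}{w(x-y)}\,|g(y)|\,dy\Big)\Big(\int_\R w(x-y)\,|g(y)|\,dy\Big)\le\Vert g\Vert_\infty\int_\R\frac{w_x^2(x-y)}{w(x-y)}\,|g(y)|\,dy,
\]
using $\int w=1$ for the last inequality; integrating over $x$ and applying Fubini gives $\Vert\hat{u}_{xx}\Vert_{L^2}^2\le\Vert g\Vert_\infty\,\Vert g\Vert_{L^1}\int_\R\frac{w_x^2}{w}(x)\,dx<\infty$.

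I expect the only genuinely delicate points to be the passage of the derivative onto the merely piecewise-$C^1$ kernel $w$ in the identity $\hat{u}_{xx}=w_x\ast g$ when $c=0$, which is what forces the absolute-continuity hypothesis on $w$, and the absolute-continuity justification of the fundamental-theorem-of-calculus step for $\hat{u}_x$; the latter, however, is licensed as soon as $\hat{u}_{xx}\in L^2(\R)\subset L^1_{\mathrm{loc}}(\R)$ has been established, so it causes no circularity. Everything else is a routine application of Minkowski's and Young's inequalities.
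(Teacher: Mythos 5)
Your proof is correct. For part (i) and for the $c=0$ branch of part (ii) it follows essentially the paper's route: the fundamental theorem of calculus combined with H\"older/Minkowski for (i), and Assumption \ref{ass:w}(b) via the splitting $w_x=(w_x/\sqrt{w})\sqrt{w}$ to obtain $\hat{u}_{xx}=w_x\ast\left(F'(\hat{u})\hat{u}_x\right)\in L^2(\R)$ when $c=0$ --- your Cauchy--Schwarz/Fubini computation is precisely the ``straightforward calculation'' the paper leaves implicit, and it is the right way to fill it in. Where you genuinely diverge is the $c\neq 0$ branch of (ii). The paper never touches $\hat{u}_{xx}$ there: it uses the first-order identity $\hat{u}_x=\frac{1}{c}\left(\hat{u}-w\ast F(\hat{u})\right)$, takes the difference of the two translates, and controls the first term by part (i) and the second by Jensen's inequality for the probability kernel $w$ together with the Lipschitz continuity of $F$ and part (i) again, giving $\tilde{c}=\frac{1}{\vert c\vert}\left(1+\Vert F'\Vert_{\infty}\right)\Vert\hat{u}_x\Vert_{L^2}$ without any second derivative. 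You instead differentiate the profile equation once more to get $c\,\hat{u}_{xx}=\hat{u}_x-w\ast\left(F'(\hat{u})\hat{u}_x\right)\in L^2(\R)$ and then repeat the part-(i) argument one derivative up, arriving at the same constant. The two routes are equivalent in strength --- neither needs Assumption \ref{ass:w}(b) when $c\neq 0$, consistent with the Remark preceding the lemma; the paper's version avoids justifying the extra differentiation of the convolution, while yours treats (i) and (ii) by one uniform mechanism and makes the role of $\hat{u}_{xx}$ (and hence of Assumption \ref{ass:w}(b)) completely explicit.
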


\begin{proof} 
The proof of property (i) only requires H\"older's inequality, whereas for (ii)
note that in the case $c\neq 0$ equation \eqref{eq:tw} determines the gradient of the travelling wave by
\[
\hat{u}_x=\frac{1}{c} (\hat{u}-w\ast F(\hat{u})).
\]
Then, (i) and Jensen's inequality suffice to verify the above Lipschitz property. In the case $c=0$ equation \eqref{eq:tw} yields the identity $\hat{u}=w\ast F(\hat{u})$ such that by differentiating we see that the gradient satisfies
$\hat{u}_x=w\ast F'(\hat{u}) \hat{u}_x$. The rest again follows from straightforward calculations and Assumption \ref{ass:w}.
\end{proof}

After these preparatory considerations we state the main existence and uniqueness result of this subsection:
\begin{prop} \label{prop:ode}
Let $m>0$ and let Assumption \ref{ass:lipgrad} be satisfied. Then
$P$-almost surely there exists a unique global solution of the (pathwise) ODE \eqref{gradientdesc}.
\end{prop}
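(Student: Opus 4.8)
The plan is to read \eqref{gradientdesc} as a scalar ODE $\dot C(t) = G(t,C(t))$, $C(0)=0$, with
\[
G(t,C) := -m\,\langle \hat u_x(\cdot - ct - C),\, u(t,\cdot) - \hat u(\cdot - ct - C)\rangle_{L^2},
\]
and to prove existence and uniqueness of a global solution \emph{pathwise}, i.e. for every $\omega$ in the event of full probability on which the solution $v$ of Theorem \ref{thm:v} has its continuous modification and satisfies $\sup_{t\in[0,T]}\|v_t\|_{L^2}<\infty$ for each $T$. On such an $\omega$ the equation is a genuine deterministic ODE and the classical local existence–uniqueness theorem plus a no-blow-up argument will apply; I will not address measurability of $C$ in $\omega$, since the statement only asserts a.s. solvability.

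First I would check that $G$ is well defined. Using the decomposition \eqref{eq:dec}, $u_t=\hat u_t+v_t$ with $\hat u_t=\hat u(\cdot-ct)$, one has
\[
u(t,\cdot)-\hat u(\cdot-ct-C) = v_t + \big(\hat u(\cdot-ct)-\hat u(\cdot-ct-C)\big),
\]
and the bracketed term lies in $L^2(\R)$ by Lemma \ref{lem:lipgrad}(i); since also $\hat u_x\in L^2(\R)$ (the lemma preceding Lemma \ref{lem:lipgrad}), all inner products occurring make sense. Correspondingly I would split $G=G_1+G_2$ with $G_1(t,C)=-m\langle \hat u_x(\cdot-ct-C),v_t\rangle_{L^2}$ and, after the substitution $y=x-ct$, $G_2(t,C)=-m\,h(C)$ where $h(C)=\int_\R \hat u_x(y-C)\big(\hat u(y)-\hat u(y-C)\big)\,dy$ depends on $C$ only.

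Next I would establish the two structural properties needed for the classical theory. (a) Joint continuity of $(t,C)\mapsto G(t,C)$: this follows from continuity of $t\mapsto v_t$ in $L^2$, strong continuity of translations on $L^2(\R)$ applied to $\hat u_x$, Lemma \ref{lem:lipgrad}(i), and Cauchy–Schwarz. (b) Local Lipschitz continuity in $C$, uniform for $t\in[0,T]$: for each of $G_1,G_2$ I would estimate $G_i(t,C_1)-G_i(t,C_2)$ by inserting an intermediate translate, so that every resulting factor is controlled by $\|\hat u_x\|_{L^2}$, by $\sup_{t\in[0,T]}\|v_t\|_{L^2}$, by Lemma \ref{lem:lipgrad}(i), or by Lemma \ref{lem:lipgrad}(ii) (the translation-Lipschitz bound for $\hat u_x$ in $L^2$, which is where Assumption \ref{ass:w}(b) is used to cover the degenerate case $c=0$). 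This gives $|G(t,C_1)-G(t,C_2)|\le L_{T,M}|C_1-C_2|$ for $|C_1|,|C_2|\le M$ and $t\in[0,T]$, so Picard–Lindelöf yields a unique maximal solution on some $[0,\tau)$.

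Finally, to make the solution global I would use an a priori linear growth bound: by Cauchy–Schwarz and Lemma \ref{lem:lipgrad}(i),
\[
|G(t,C)| \le m\|\hat u_x\|_{L^2}\|v_t\|_{L^2} + m\|\hat u_x\|_{L^2}^2\,|C| \le a_T + b_T\,|C|,
\]
with $a_T,b_T<\infty$ on the good event; Grönwall then gives $|C(t)|\le (a_T/b_T)(e^{b_T t}-1)$ on every subinterval of $[0,T]$, so $\tau>T$, and since $T$ is arbitrary the solution is global. I expect step (b) to be the real obstacle: one has to notice that $\hat u_x$ is always paired against a \emph{difference} of translates of $\hat u$ (hence an $L^2$ function, despite $\hat u\notin L^2$), and that the Lipschitz dependence on the shift is precisely the content of Lemma \ref{lem:lipgrad}, with Assumption \ref{ass:w}(b) being indispensable for part (ii) when $c=0$. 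The continuity in $t$ and the global extension are then routine.
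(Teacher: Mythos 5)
Your proposal is correct and follows essentially the same route as the paper: the same decomposition of the inner product via $u(t,\cdot)-\hat u(\cdot-ct-C)=v_t+(\hat u(\cdot-ct)-\hat u(\cdot-ct-C))$, the same reliance on Lemma \ref{lem:lipgrad} and the continuous modification of $v$ for continuity and Lipschitz estimates, and Picard--Lindel\"of to conclude. The only cosmetic difference is that you settle for a local Lipschitz bound and then globalize via linear growth and Gr\"onwall, whereas the paper's substitution trick yields a Lipschitz constant global in $C$ and uniform in $t$, making the separate no-blow-up step unnecessary.
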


\begin{proof}
Let 
\[
B(t,C):=\langle \hat{u}_x(\cdot-ct-C), u(t,\cdot)-\hat{u}(\cdot -ct-C)\rangle_{L^2}.
\]
Since \eqref{gradientdesc} constitutes an initial value problem of first order it suffices to show that the map 
${(t,C)\in[0,T]\times \R \mapsto B(t,C)}$ is continuous and Lipschitz continuous w.r.t. $C$ uniformly in $t$. Decomposing
\begin{align}\notag
B(t,C) &= \langle \hat{u}_x(\cdot-ct-C), u(t,\cdot)-\hat{u}(\cdot -ct)\rangle_{L^2} + \langle \hat{u}_x(\cdot-ct-C), \hat{u}(\cdot-ct)-\hat{u}(\cdot -ct-C)\rangle_{L^2}
\end{align}

\noindent observe that the maps
\begin{enumerate}[(i)]
\item $(t,C)\mapsto \hat{u}_x(\cdot -ct-C)$
\item $(t,C)\mapsto u(t,\cdot)-\hat{u}(\cdot-ct)$
\item $(t,C)\mapsto \hat{u}(\cdot-ct)-\hat{u}(\cdot -ct-C)$
\end{enumerate}
are continuous in $L^2(\R)$. By Lemma \ref{lem:lipgrad} this holds true due to the continuity of $\hat{u},\ \hat{u}_x$ and ${v_t=u(t,\cdot)-\hat{u}_t}$, where we take the continuous modification provided by Theorem \ref{thm:v}. Next, for any $C_1, C_2 \in \R, \;t\in[0,T]$,

\begin{align*}
B(t,C_1)-B(t,C_2)&= \langle \hat{u}_x(\cdot-ct-C_1)-\hat{u}_x(\cdot-ct-C_2) , u(t,\cdot)-\hat{u}(\cdot -ct)\rangle_{L^2}\\
&\quad+ \langle \hat{u}_x(\cdot-ct-C_1), \hat{u}(\cdot-ct)-\hat{u}(\cdot -ct-C_1)\rangle_{L^2}\\
&\quad-\langle \hat{u}_x(\cdot-ct-C_2), \hat{u}(\cdot-ct)-\hat{u}(\cdot -ct-C_2)\rangle_{L^2}\\
&= I+ II+ III, \text{ say.}
\end{align*}

\noindent Applying Cauchy-Schwarz as well as Lemma \ref{lem:lipgrad} the first summand is $P$-a.s. controlled by
\begin{align*}
\vert I \vert &\leq \Vert \hat{u}_x(\cdot-ct-C_1)-\hat{u}_x(\cdot-ct-C_2)\Vert_{L^2} \Vert v_t\Vert_{L^2}\\
&\leq c\ \Vert v\Vert_{C([0,T]; L^2(\R))}\ \vert C_1-C_2\vert.
\end{align*}

\noindent After the substitution $\cdot-C_1 \to \cdot$ (analogously for $C_2$) the second and third part can similarly be estimated by 
\begin{align*}
\vert II+III \vert &= \vert \langle \hat{u}_x(\cdot -ct), \hat{u}(\cdot -ct+C_1)-\hat{u}(\cdot-ct)\rangle_{L^2}\\
&\quad- \langle \hat{u}_x(\cdot-ct), \hat{u}(\cdot-ct+C_2)-\hat{u}(\cdot -ct)\rangle_{L^2}\vert\\
&= \vert \langle \hat{u}_x(\cdot-ct), \hat{u}(\cdot-ct+C_1)-\hat{u}(\cdot-ct+C_2)\rangle_{L^2}\vert\\
&\leq \Vert \hat{u}_x(\cdot-ct)\Vert_{L^2} \Vert \hat{u}(\cdot-ct+C_1)-\hat{u}(\cdot-ct+C_2)\Vert_{L^2}\\
&\leq \Vert \hat{u}_x \Vert_{L^2}^2 \ \vert C_1-C_2\vert.
\end{align*}
Overall, this provides us with the $P$-a.s. existence and uniqueness of a classical global solution
$C \in C^1([0,T];\R)$.
\end{proof}

\noindent The representation 
\begin{align*}
u_t=\hat{u}(\cdot -ct-C(t))+\tilde{v}_t,
\end{align*}
i.e.
\begin{align}\label{eq:tildev1}
\tilde{v}_t= u_t-\hat{u}(\cdot-ct-C(t))=v_t+\left(\hat{u}(\cdot-ct)-\hat{u}(\cdot-ct-C(t))\right)
\end{align}

\noindent should now phenomenologically correspond to the representation established by Bressloff and Webber in \cite{Bressloff}.

\noindent Introducing the notation
\begin{align*}
\tilde{\hat{u}}(t) := \hat{u}(\cdot -ct-C(t)),\quad
\tilde{\hat{u}}_x(t) :=\hat{u}_x(\cdot -ct-C(t)) 
\end{align*}

\noindent it is obvious that $(\tilde{v}_t)_{t\in[0,T]}$ is again a process on $L^2(\R)$ and strong solution of the function-valued SDE

\begin{align}\notag
d\tilde{v}_t&= dv_t+\partial_t\hat{u}(\cdot-ct)\ dt- \partial_t\hat{u}(\cdot-ct-C(t))\ dt\\ \notag
&=[-v_t+w\ast \left(F(v_t+\hat{u}_t)-F(\hat{u}_t)\right)]\ dt -c \hat{u}_x(\cdot-ct) \ dt\\ \notag
&\quad + \left(c+\dot{C}(t)\right)\hat{u}_x(\cdot -ct-C(t)) \ dt +\sqrt{\eps} dW_t^Q \\ \notag
&=\big[-v_t+w\ast \left(F(v_t+\hat{u}_t)-F(\hat{u}_t)\right) 
  -\hat{u}_t+w\ast F(\hat{u}_t)+ \hat{u}(\cdot -ct-C(t)) \\ \notag
  &\quad - w \ast F(\hat{u}(\cdot -ct-C(t)))
  + \dot{C}(t)\hat{u}_x(\cdot -ct-C(t))\big] \ dt +\sqrt{\eps} dW_t^Q \\ \notag
  &=\left[-\tilde{v}_t + w\ast \left(F(v_t+\hat{u}_t)-F(\hat{u}_t)\right)+w\ast \left(F(\hat{u}_t)-F(\hat{u}(\cdot -ct-C(t)))\right)\right]dt\\ \notag
  &\quad 
  + \dot{C}(t)\ \hat{u}_x(\cdot -ct-C(t)) \ dt+ \sqrt{\eps} dW_t^Q \\ \notag
  &= \left[-\tilde{v}_t + w \ast \left(F(\tilde{v}_t+\tilde{\hat{u}}_t)-F(\tilde{\hat{u}}_t)\right)\right] dt 
  + \dot{C}(t)\ \hat{u}_x(\cdot -ct-C(t)) \ dt+ \sqrt{\eps} dW_t^Q \\
  &=: [ A(t) \tilde{v}_t + R_t(\tilde{\hat{u}}_t, \tilde{v}_t) ] dt + \sqrt{\eps} dW_t^Q \label{eq:tildev}
\end{align}

\noindent for the time- and $\omega$-dependent linear operator
\begin{align}\label{eq:defA}
A(t)v = -v + w\ast\left(F'(\tilde{\hat{u}}_t)\ v \right) 
       - m \ \langle \hat{u}_x(\cdot-ct-C(t)), v \rangle_{L^2} \cdot \hat{u}_x(\cdot-ct-C(t))
\end{align}
and the remainder $R_t$ as introduced in \eqref{remainder}. Here, the $\omega$-dependence of $A$ results from the $\omega$-dependence of the process $C$. Note that $\langle \hat{u}_x, v\rangle \cdot \hat{u}_x$ in the above formula denotes an orthogonal projection of $v$ onto the linear span generated by $\hat{u}_x(\cdot-ct-C(t))$ that, in the case $c\neq 0$, describes the infinitesimal tangential direction of the travelling wave $\hat{u}$ w.r.t. time, because
\[
\partial_t \hat{u} = -\hat{u} + w\ast F(\hat{u}) = -c\hat{u}_x.
\]  

To obtain an even deeper understanding of the behaviour of the stochastic travelling wave $u$, its stability and the exact influence of the noise strength $\sqrt{\eps}$ the following analysis works out the inner structure of the fluctuations $\tilde{v}$. To this end it is necessary to more thoroughly examine the family of linear operators $(A(t))_{t\in[0,T]}$.

\subsection{Properties of the associated family of linear operators}\label{sec:prop}
For $t\in[0,T]$ let $A^0(t)$ denote the linear operator on $L^2(\R)$ defined by 
\[
A^0(t) z = - z+ w\ast \left(F'(\tilde{\hat{u}}_t)\ z\right), \quad\; z\in L^2(\R).
\]
We work under the following 

\begin{ass}\label{ass:spectralgap}
There exist $\kappa_*>0$, $C_*>0$ such that
\begin{align*}\label{A1}
\langle A^0(t) z, z \rangle \leq -\kappa_* \Vert z \Vert^2+ C_*\left(\langle \tilde{\hat{u}}_x(t), z \rangle\right)^2 \quad \forall z\in L^2(\R).\tag{A1}
\end{align*}
Furthermore, in the sequel the relaxation rate $m$ is chosen such that 
\begin{align*}
 m>C_*. \tag{A2}\label{A2}
\end{align*}
\end{ass}
\bigskip

\noindent In the following Lemma relevant properties of the family of linear operators $(A(t))$ are proven. We denote by $\mathscr{L}(L^2(\R))$ the space of all bounded linear operators on $L^2(\R)$.
\begin{lem}\label{lem:A}
$A: [0,T] \times \Omega \rightarrow \mathscr{L}(L^2(\R))$ is well-defined, strongly measurable and strongly adapted.
Furthermore,
\begin{enumerate}[(i)]
\item For all $t\in[0,T]$, almost all $\omega \in \Omega$, $A(t)(\omega)$ is a bounded linear operator on $L^2(\R)$. 
\item For almost all $\omega \in \Omega$ the map $t\mapsto A(t)(\omega)$ is continuous in the uniform operator topology.
\end{enumerate}
In particular, $A$ $P$-a.s. generates an evolution family $\left(P(t,s)\right)_{0\leq s \leq t \leq T}$ on $L^2(\R)$ and for $m>C_*$:
\begin{itemize}
\item[(iii)] $\langle v, A(t)v \rangle_{L^2} \leq -\kappa_* \Vert v\Vert_{L^2}^2$.
\item[(iv)] $\Vert P(t,s) \Vert_{\mathscr{L}(L^2(\R))} \leq e^{-\kappa_*(t-s)}$ $\;\;\forall \ 0\leq s \leq t \leq T$.
\end{itemize}
\end{lem}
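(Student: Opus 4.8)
\textbf{Proof plan for Lemma \ref{lem:A}.}

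The plan is to work through the listed claims essentially in order, reducing everything to the three explicit summands in the defining formula \eqref{eq:defA}. First I would establish that $A(t)(\omega)$ is a bounded linear operator on $L^2(\R)$ for fixed $(t,\omega)$: the identity $-v$ is trivially bounded, the convolution term $v\mapsto w\ast(F'(\tilde{\hat u}_t)\,v)$ has operator norm at most $\Vert w\Vert_{L^1}\Vert F'\Vert_\infty = \Vert F'\Vert_\infty$ by Young's inequality, and the rank-one term $v\mapsto m\langle\hat u_x(\cdot-ct-C(t)),v\rangle_{L^2}\,\hat u_x(\cdot-ct-C(t))$ has norm $m\Vert\hat u_x\Vert_{L^2}^2$, which is finite by the preceding Lemma (for bounded $w$ and $F\in C_b^1$, $\hat u_x\in L^2(\R)$). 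This gives (i). For strong measurability and strong adaptedness, I would fix $v\in L^2(\R)$ and note that the only source of $t$- and $\omega$-dependence is the shift $\hat u(\cdot-ct-C(t))$ and $\hat u_x(\cdot-ct-C(t))$, which depend on $(t,\omega)$ only through the real number $ct+C(t)$; since $C$ has continuous (hence measurable and adapted) paths by Theorem \ref{prop:ode}, and since translation is strongly continuous on $L^2(\R)$, the map $(t,\omega)\mapsto A(t)(\omega)v$ is measurable and $\mathcal{F}_t$-adapted. Composing with the (continuous) map $\tau\mapsto F'(\hat u(\cdot-\tau))$ into $L^\infty$, and using that multiplication $L^\infty\times L^2\to L^2$ and convolution are continuous, handles the middle term likewise.

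For (ii), uniform-operator-topology continuity in $t$, I would bound $\Vert A(t)-A(s)\Vert_{\mathscr{L}}$ by estimating each summand separately. The identity term contributes nothing. The rank-one term is controlled by $\Vert\hat u_x(\cdot-ct-C(t))-\hat u_x(\cdot-cs-C(s))\Vert_{L^2}$ (times bounded factors), which by Lemma \ref{lem:lipgrad}(ii) is at most $\tilde c\,|ct+C(t)-cs-C(s)|$; continuity of $t\mapsto C(t)$ then gives continuity. The convolution term is bounded in operator norm by $\Vert F'(\tilde{\hat u}_t)-F'(\tilde{\hat u}_s)\Vert_{L^\infty}$; writing this as $\Vert F'(\hat u(\cdot-ct-C(t)))-F'(\hat u(\cdot-cs-C(s)))\Vert_\infty$ and using uniform continuity of $F'\circ\hat u$ (a bounded $C^1$ function composed with a monotone profile connecting $0$ and $1$) together with continuity of $ct+C(t)$, this too tends to $0$. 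Given (i) and (ii), the existence of the evolution family $(P(t,s))$ follows from the standard theory of non-autonomous linear Cauchy problems with a norm-continuous generator (e.g.\ via the Dyson--Phillips series, which converges because $\sup_{t}\Vert A(t)\Vert<\infty$ on $[0,T]$ for a.e.\ $\omega$); I would cite the relevant result from \cite{DaPrato} or a standard evolution-equations reference.

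Claim (iii) is the key dissipativity estimate: expanding $\langle v,A(t)v\rangle_{L^2}$ gives $\langle A^0(t)v,v\rangle + m\langle A^0(t)\cdot\rangle$-free contribution $-\,m\,(\langle\tilde{\hat u}_x(t),v\rangle)^2$ from the projection term (note $\tilde{\hat u}_x(t)=\hat u_x(\cdot-ct-C(t))$), so Assumption \ref{ass:spectralgap}\eqref{A1} yields $\langle v,A(t)v\rangle_{L^2}\le -\kappa_*\Vert v\Vert_{L^2}^2 + (C_*-m)(\langle\tilde{\hat u}_x(t),v\rangle)^2 \le -\kappa_*\Vert v\Vert_{L^2}^2$ precisely because $m>C_*$ by \eqref{A2}. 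Finally (iv) follows from (iii) by the standard contraction estimate for evolution families: for fixed $s$ and $v$, setting $\varphi(t):=\tfrac12\Vert P(t,s)v\Vert_{L^2}^2$ one has $\varphi'(t)=\langle P(t,s)v, A(t)P(t,s)v\rangle_{L^2}\le-\kappa_*\Vert P(t,s)v\Vert^2=-2\kappa_*\varphi(t)$, so Gr\"onwall gives $\Vert P(t,s)v\Vert_{L^2}\le e^{-\kappa_*(t-s)}\Vert v\Vert_{L^2}$, hence the operator-norm bound. The main obstacle I anticipate is the careful verification of (ii)---in particular arguing uniform continuity of $\xi\mapsto F'(\hat u(\cdot-\xi))$ as a map $\R\to L^\infty(\R)$, which genuinely uses that $F'$ is bounded and (uniformly) continuous and that $\hat u$ is a fixed bounded profile---together with invoking the right non-autonomous generation theorem so that (iii) transfers cleanly to the growth bound (iv); the algebra in (i) and (iii) is routine once Young's inequality and $\hat u_x\in L^2$ are in hand.
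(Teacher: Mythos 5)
Your proposal is correct and follows essentially the same route as the paper: term-by-term operator bounds for (i), the same two-summand decomposition controlled by Lemma \ref{lem:lipgrad} and the regularity of $F'$ and $C(t)$ for (ii), the standard non-autonomous generation theorem for the evolution family, the identical cancellation $(C_*-m)(\langle\tilde{\hat u}_x(t),v\rangle)^2\le 0$ for (iii), and a Gr\"onwall-type differential inequality for (iv) (the paper phrases it via the rescaled propagator $Q(t,s)=e^{\kappa_*(t-s)}P(t,s)$, which is the same computation). The only cosmetic difference is that in (ii) the paper estimates the convolution term with Jensen's inequality and $\Vert F''\Vert_\infty$ to get an explicit Lipschitz bound in $t$, whereas you factor through $\Vert F'(\tilde{\hat u}_t)-F'(\tilde{\hat u}_s)\Vert_{L^\infty}$ via Young's inequality; both are fine.
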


\begin{proof} Let $z\in L^2(\R)$. As a combination of measurable, adapted functions $A(\cdot,\cdot)z$ itself is measurable and adapted.

\noindent(i) Linearity of the operator is obvious. Furthermore, straightforward calculations yield the bound
\[
\sup_{r\in[0,T]} \Vert A(r) \Vert\leq \ 1+\Vert F'\Vert_{\infty}+ m \ \Vert \hat{u}_x \Vert^2_{L^2}.
\]

\noindent(ii) Let $s,t\in[0,T]$. We estimate
\begin{align*}
&\Vert A(t)z-A(s)z\Vert\\
 &\leq \big{\Vert} w\ast \left(F'(\tilde{\hat{u}}_t)\ z \right)- w\ast \left(F'(\tilde{\hat{u}}_s)\ z \right)\big{\Vert} + \big{\Vert} -m\ \langle \tilde{\hat{u}}_x(t), z \rangle \ \tilde{\hat{u}}_x(t) + m\ \langle \tilde{\hat{u}}_x(s), z \rangle \ \tilde{\hat{u}}_x(s)\big{\Vert}\\
&= \big{\Vert} w\ast\left(F'(\tilde{\hat{u}}_t)-F'(\tilde{\hat{u}}_s)\right) \ z \big{\Vert}\\
&\quad + m \
\big{\Vert} \langle \tilde{\hat{u}}_x(t), z \rangle \ \tilde{\hat{u}}_x(t)
- \langle \tilde{\hat{u}}_x(t), z \rangle \ \tilde{\hat{u}}_x(s)
+\langle \tilde{\hat{u}}_x(t), z \rangle \ \tilde{\hat{u}}_x(s)
-\langle \tilde{\hat{u}}_x(s), z \rangle \ \tilde{\hat{u}}_x(s) \big{\Vert}\\
&=: I + II
\end{align*}

\noindent Applying Jensen's inequality one obtains
\begin{align*}
I^2 &\leq \int_{\R}\int_{\R} w(x-y)\ \big{\vert} F'(\tilde{\hat{u}}_t(y))-F'(\tilde{\hat{u}}_s(y))\big{\vert}^2 \  z^2(y) \ dy \ dx \\
&\leq \Vert F''\Vert^2_{\infty} \ \int_{\R}\int_{\R} w(x-y)\ \big{\vert} \tilde{\hat{u}}_t(y)-
\tilde{\hat{u}}_s(y)\big{\vert}^2 \ z^2 (y) \ dy \ dx\\
&\leq \Vert F''\Vert^2_{\infty} \ \int_{\R}\int_{\R} w(x-y) \ \Vert \hat{u}\Vert_{C^1}^2 \ \vert ct-cs+C(t)-C(s)\vert^2\ z^2 (y) \ dy \ dx\\
&\leq \Vert F''\Vert^2_{\infty}\Vert \hat{u}\Vert_{C^1}^2 \ \left(c\ \vert t-s \vert+ \Vert C\Vert_{C^1} \vert t-s \vert\right)^2  \int_{\R} z^2(y) \int_{\R} w(x-y)\ dx \ dy \\
&\leq \tilde{c} \ \vert t-s \vert^2 \ \Vert z \Vert^2_{L^2}. 
\end{align*}
The second summand is estimated as follows:
\begin{align*}
II &\leq m \left(\Vert \langle \tilde{\hat{u}}_x(t), z\rangle \ \left(\tilde{\hat{u}}_x(t)-\tilde{\hat{u}}_x(s)\right) \Vert_{L^2}
+ \Vert \langle \tilde{\hat{u}}_x(t)-\tilde{\hat{u}}_x(s),z\rangle \ \tilde{\hat{u}}_x(s)\Vert_{L^2} \right)\\
&\leq m \ \left(\Vert \tilde{\hat{u}}_x(t)\Vert_{L^2} \ \Vert z \Vert_{L^2} \ \Vert\tilde{\hat{u}}_x(t)-\tilde{\hat{u}}_x(s)\Vert_{L^2}
+ \Vert \tilde{\hat{u}}_x(t)-\tilde{\hat{u}}_x(s)\Vert_{L^2} \ \Vert z \Vert_{L^2} \ \Vert \tilde{\hat{u}}_x(s)\Vert_{L^2} \right)
\end{align*}

\noindent According to Lemma \ref{lem:lipgrad} there exists a constant $\tilde{c}>0$ such that
\begin{align*}
\Vert \tilde{\hat{u}}_x(t)-\tilde{\hat{u}}_x(s)\Vert_{L^2} &\leq \tilde{c}\ \vert ct+C(t)-cs-C(s)\vert \leq \tilde{c}\ \left(c\ \vert t-s\vert + \Vert C \Vert_{C^1} \vert t-s \vert\right)\\
\end{align*}

\noindent Thus, $II$ is also of order $\vert t-s\vert$, which then yields
the continuity (even Lipschitz continuity) of $t\mapsto A(t)$ in the uniform operator norm. 
\smallskip

\noindent Given (i) and (ii) the existence of an evolution family $\left(P(t,s)\right)_{0\leq s \leq t\leq T}$ on $L^2(\R)$ generated by $\left(A(t)\right)_{t\in [0,T]}$ is now provided by \cite[Chapter 5, Theorem 5.1]{Pazy}.
\medskip

\noindent(iii)  The negative definiteness of the operator is a direct consequence of Assumption \ref{ass:spectralgap}.

\noindent(iv) Let $Q(t,s) = e^{\kappa_*(t-s)}  P(t,s),\;s\leq t$. Then,
\begin{align*}
\partial_t \Vert Q(t,s) z\Vert^2 &= 2 \ \langle Q(t,s)z, \ \kappa_* e^{\kappa_*(t-s)}P(t,s)z+ e^{\kappa_*(t-s)}A(t)P(t,s)z\rangle\\
&=2\ \langle Q(t,s) z , \ (A(t)+\kappa_*)\ Q(t,s)z\rangle\\
&\leq -2\ \kappa_* \Vert Q(t,s) z \Vert^2 + 2\ \kappa_* \Vert Q(t,s) z \Vert^2\\
&=0
\end{align*}
which implies
\[
\Vert Q(t,s)z\Vert^2\leq \Vert Q(s,s)z \Vert^2 = \Vert z\Vert^2.
\]
\end{proof}

\subsection{Ornstein-Uhlenbeck decomposition of the \mbox{$L^2$-component}}\label{sec:5}
To even better characterise the behaviour of the $L^2(\R)$-valued  fluctuations $\tilde{v}$ we will derive a decomposition into an Ornstein-Uhlenbeck process $(Z_t)_{t\in[0,T]}$ and
a corresponding remainder process $(y_t^{\eps})_{t\in[0,T]}$, which turn out to display different orders of the intrinsic dynamics of $\tilde{v}$. To this end let us first introduce 
\begin{align}\label{eq:veps}
\tilde{v}^{\eps}_t := \frac{1}{\sqrt{\eps}} \ \tilde{v}_t,
\end{align}

\noindent which by equation \eqref{eq:tildev} satisfies the SDE

\begin{align*}
\begin{cases}
d\vet
&= \left[A(t)\ \tilde{v}^{\eps}_t +\sqrt{\eps}\ R_t^{\eps}(\tilde{\hat{u}}_t, \vet)\right] \ dt + dW_t^Q\\
\tilde{v}^{\eps}(0)&= \frac{1}{\sqrt{\eps}}\ v_0
\end{cases}
\end{align*}
with
\begin{align*}
R_t^{\eps}(\tilde{\hat{u}}_t, \vet)= w\ast \left(\frac 12 F''(\xi(\tilde{\hat{u}}_t, \sqrt{\eps} \vet))(\vet)^2\right).
\end{align*}
As before, $\xi(u,v)$ denotes an intermediate point between $u$ and $v$. 

To work out the inner structure of the solution w.r.t. different order terms we introduce the decomposition
\begin{align*}
\vet = Z_t + \yet,
\end{align*}
where $(Z_t)$ satisfies the linear SDE

\begin{align}\label{eq:Z}
\begin{cases}
dZ_t&= A(t) Z_t \ dt+ dW_t^Q\\
Z_0&=0
\end{cases}
\end{align}
with $\omega$-dependent operator $A(t)$ and $(\yet)$ consequently solves the pathwise function-valued ODE
\begin{align}\label{eq:yt}
\begin{cases}
y'_{\eps}(t) = A(t)y_{\eps}(t) + \sqrt{\eps}\ R_t^{\eps}(\tilde{\hat{u}}_t, Z_t+y_{\eps}(t))\\
y_{\eps}(0)= \frac{1}{\sqrt{\eps}} v_0
\end{cases}
\end{align}

\noindent The well-posedness of this decomposition is provided by the following existence and uniqueness theorems:

\begin{thm}\label{thm:strongz}
There exists a unique mild solution $Z$ of equation \eqref{eq:Z}
satisfying
\begin{align}\label{eq:strongz}
Z_t= \int_0^t A(s)Z_s \ ds + W_t^Q.
\end{align}
Thus, $Z$ is also a strong solution.
\end{thm}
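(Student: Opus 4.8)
The plan is to build $Z$ directly as the pathwise solution of the integral equation \eqref{eq:strongz} and then to identify it with the stochastic convolution against the evolution family $\left(P(t,s)\right)$ of Lemma \ref{lem:A}. Because $A(t)$ is a \emph{bounded} operator, the mild and strong formulations will turn out to coincide almost for free, so the whole statement reduces to a fixed-point argument together with one identification.

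For the construction, fix $\omega$ outside the $P$-null set on which Lemma \ref{lem:A}(ii) fails or on which $t\mapsto W^Q_t$ fails to be $L^2(\R)$-continuous. Then $M:=\sup_{t\in[0,T]}\Vert A(t)\Vert_{\mathscr L(L^2(\R))}<\infty$ and $g:=W^Q_{\cdot}$ is a continuous $L^2(\R)$-valued path. On $C([0,T];L^2(\R))$ equipped with the weighted norm $\Vert Z\Vert_\lambda:=\sup_{t\in[0,T]}e^{-\lambda t}\Vert Z_t\Vert_{L^2}$ for a fixed $\lambda>M$, the map $\Phi(Z)_t:=g(t)+\int_0^t A(s)Z_s\,ds$ is well-defined (a Bochner integral, by boundedness of $A$ and continuity of $g$) and a strict contraction with constant $M/\lambda$. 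Banach's fixed point theorem then yields a unique $Z=Z(\omega)\in C([0,T];L^2(\R))$ with $Z_t=\int_0^t A(s)Z_s\,ds+W^Q_t$, which is precisely \eqref{eq:strongz}; uniqueness is pathwise, hence $P$-a.s. Realising $Z$ as the $C([0,T];L^2(\R))$-limit of the Picard iterates $Z^{(0)}\equiv 0$, $Z^{(n+1)}:=\Phi(Z^{(n)})$, an easy induction — using that $W^Q$ is adapted, that $A$ is strongly measurable and strongly adapted (Lemma \ref{lem:A}), and that $t\mapsto\int_0^t(\cdot)\,ds$ preserves $\mathcal F_t$-measurability — shows that every $Z^{(n)}$, hence $Z$, is jointly measurable and adapted with continuous paths; in particular $s\mapsto A(s)Z_s$ is $P$-a.s.\ Bochner-integrable on $[0,T]$ since $\Vert A(s)Z_s\Vert_{L^2}\le M\sup_{r\le T}\Vert Z_r\Vert_{L^2}$.

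It remains to recognise this $Z$ as the mild solution $Z_t=\int_0^t P(t,s)\,dW^Q_s$ of \eqref{eq:Z}. The point is that, since $A$ depends on $\omega$ only through the phase process $C$, the evolution family satisfies $P(t,s)=P(t,s)(\omega)$ with $P(t,s)$ being $\mathcal F_t$- but in general not $\mathcal F_s$-measurable, so $s\mapsto P(t,s)$ is not adapted and the classical stochastic convolution of \cite{DaPrato} does not apply. This is exactly the scenario addressed by the generalised stochastic convolution of \cite{Pronk}: by Lemma \ref{lem:A} the family $\left(P(t,s)\right)_{0\le s\le t\le T}$ is $P$-a.s.\ a strongly continuous evolution family with $\Vert P(t,s)\Vert\le e^{-\kappa_*(t-s)}\le 1$, generated by the bounded, strongly measurable, strongly adapted $A(\cdot)$, so $\int_0^t P(t,s)\,dW^Q_s$ is well-defined, and the representation formula of \cite{Pronk} — a stochastic Fubini-type identity, legitimate here because $s\mapsto A(s)Z_s$ is a.s.\ integrable — identifies it as a solution of \eqref{eq:strongz}. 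By the pathwise uniqueness proved above this stochastic convolution coincides with $Z$; hence $Z$ is simultaneously the unique mild and the unique strong solution of \eqref{eq:Z}.

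The only genuine obstacle is this last identification: the backward evolution family $s\mapsto P(t,s)$ is non-adapted because of the $\omega$-dependence of $A$ through $C$, which is precisely why the classical mild-solution machinery must be replaced by the construction of \cite{Pronk}. Once Lemma \ref{lem:A} is in hand — so that $A$ is known to be bounded and operator-norm continuous in $t$ — the existence, uniqueness, measurability and adaptedness of $Z$, as well as the equivalence of the mild and strong formulations, are routine.
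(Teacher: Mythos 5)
Your construction is correct and, at its core, is the same argument as the paper's: because $A(t)(\omega)$ is a \emph{bounded} operator (Lemma \ref{lem:A}(i)), the drift $Z\mapsto A(t)(\omega)Z$ is globally Lipschitz with a constant uniform in $t$ and $\omega$, so one may view \eqref{eq:Z} as a semilinear equation whose linear part is $0$ (identity semigroup); the mild formulation then literally \emph{is} the integral identity \eqref{eq:strongz}, and existence and uniqueness follow from a Picard/Banach fixed point. The paper gets this by citing \cite[Theorem 7.4]{DaPrato}, whereas you run the contraction argument by hand, pathwise, on $C([0,T];L^2(\R))$ with an exponentially weighted norm and recover adaptedness from the Picard iterates --- a perfectly valid, slightly more self-contained route that makes explicit why no stochastic integration beyond $\int_0^t dW^Q_s=W^Q_t$ is needed. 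The one place where you diverge from (and overshoot) the statement is your third paragraph: the identification of $Z$ with the stochastic convolution $\int_0^t P(t,s)\,dW^Q_s$ against the evolution family is \emph{not} part of this theorem. In the paper, ``mild solution'' in Theorem \ref{thm:strongz} refers to the mild solution with respect to the trivial semigroup $T_t=\Id$, for which mild and strong coincide tautologically; the representation through $P(t,s)$, which indeed requires the non-adapted-integrand machinery of \cite{Pronk} because $s\mapsto P(t,s)$ is not $\mathcal F_s$-measurable, is exactly the content of the subsequent Theorem \ref{thm:mildsol2}. Your sketch of that identification essentially presupposes the Pronk representation formula rather than proving it, so as written it should be regarded as a pointer to Theorem \ref{thm:mildsol2} rather than as part of the proof of Theorem \ref{thm:strongz}; omitting it leaves a complete and correct proof of the stated result.
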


\begin{proof}
To prove existence and uniqueness of a mild solution to equation \eqref{eq:Z}
set
\[
B(t,\omega,Z):= A(t)(\omega)Z
\]
and verify that $B$ satisfies a Lipschitz- as well as linear growth condition as required in the standard existence and uniqueness result for mild solutions (\cite[Theorem 7.4]{DaPrato}). Since one can formally enhance equation \eqref{eq:Z} by the linear operator $A=0$, generating the semigroup
$T_t= \id, \; t\in[0,T]$, the unique mild solution ${Z\in L^{\infty}([0,T];L^2(\R))}$ even satisfies the identity
\[
Z_t=\int_0^t A(s)Z_s \ ds+W_t^Q.
\]
\end{proof}

\noindent Given the unique strong solutions $(v_t^{\eps})_{t\in[0,T]}$ and $(Z_t)_{t\in[0,T]}$ the following theorem is a direct consequence: 

\begin{thm}
For all $T>0$ there exists a differentiable $L^2(\R)$-valued process $(y^{\eps}_t)$,
which is the unique strong solution of the pathwise ODE \eqref{eq:yt}.
\end{thm}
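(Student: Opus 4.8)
The plan is to build on the remark preceding the statement: the process $\yet$ is forced upon us as $\yet := \vet - Z_t$, where $\vet$ is the rescaled strong solution of \eqref{eq:tildev} given by \eqref{eq:veps} and $Z$ is the strong solution of \eqref{eq:Z} furnished by Theorem~\ref{thm:strongz}. First I would subtract the two integral identities, namely the one for $\vet$ inherited from \eqref{eq:tildev} and the identity \eqref{eq:strongz} for $Z$. The decisive point is that the two $W^Q$ terms cancel, so $\yet$ satisfies the \emph{pathwise} integral equation
\[
\yet = \frac{1}{\sqrt{\eps}}\,v_0 + \int_0^t \Big[\, A(s)\,\yet + \sqrt{\eps}\,R_s^{\eps}\big(\tilde{\hat{u}}_s,\, Z_s+\yet\big)\,\Big]\,ds ,
\]
i.e. the integrated form of \eqref{eq:yt}, with the right initial value since $\vet(0)=\tfrac{1}{\sqrt{\eps}}v_0$ and $Z_0=0$. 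Existence of a solution is thus immediate, and the content of the statement reduces to its differentiability and uniqueness.

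For differentiability I would show that, for $P$-a.a.\ $\omega$, the integrand $s\mapsto A(s)\,\yet + \sqrt{\eps}\,R_s^{\eps}(\tilde{\hat{u}}_s, Z_s+\yet)$ is a continuous $L^2(\R)$-valued map on $[0,T]$; the fundamental theorem of calculus for Bochner integrals then gives $\yet\in C^1([0,T];L^2(\R))$ with derivative equal to this integrand, hence $\yet$ solves \eqref{eq:yt} classically. The ingredients are: $s\mapsto A(s)$ is continuous in the uniform operator topology by Lemma~\ref{lem:A}(ii); $s\mapsto Z_s$ and $s\mapsto\yet$ admit continuous modifications (the first since $Z_t$ is the sum of the Bochner integral $\int_0^t A(s)Z_s\,ds$ — absolutely continuous because $Z\in L^\infty([0,T];L^2(\R))$ — and the continuous modification of $W^Q$; the second since $\tilde{v}_t = v_t + (\hat{u}(\cdot-ct)-\hat{u}(\cdot-ct-C(t)))$ is continuous by Theorem~\ref{thm:v}, Proposition~\ref{prop:ode} and Lemma~\ref{lem:lipgrad}(i)); the map $s\mapsto\tilde{\hat{u}}_s=\hat{u}(\cdot-cs-C(s))$ is likewise continuous in $L^2(\R)$; and $R^{\eps}$ depends continuously on its arguments. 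For the last point I would abandon the intermediate-point form and use instead the equivalent expression $\sqrt{\eps}\,R_s^{\eps}(\tilde{\hat{u}}_s, z) = \tfrac{1}{\sqrt{\eps}}\,w\ast\big(F(z+\tilde{\hat{u}}_s)-F(\tilde{\hat{u}}_s)-F'(\tilde{\hat{u}}_s)\,z\big)$, the rescaled second-order Taylor remainder; since $F\in C_b^2(\R)$ and $\|w\ast h\|_{L^2}\le\|w\|_{L^2}\|h\|_{L^1}$ with $w\in L^2(\R)$, this is locally Lipschitz in $z$ and continuous in $\tilde{\hat{u}}_s$, both in $L^2$, and composing with the continuous map $s\mapsto(\tilde{\hat{u}}_s, Z_s+\yet)$ yields continuity of the integrand.

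For uniqueness let $\bar{y}$ be any $C^1$ solution of \eqref{eq:yt} on $[0,T]$. Both $\yet$ and $\bar{y}$ are continuous, so the sets $\{Z_s+\yet : s\in[0,T]\}$ and $\{Z_s+\bar{y}_s : s\in[0,T]\}$ lie in a common ball $B_R\subset L^2(\R)$; on $B_R$ the map $R_s^{\eps}(\tilde{\hat{u}}_s,\cdot)$ is Lipschitz with a constant depending only on $R$, $\eps$, $\|F''\|_\infty$ and $\|w\|_{L^2}$, while $\sup_{s\in[0,T]}\|A(s)\|<\infty$ by Lemma~\ref{lem:A}(i). Subtracting the integral equations for $\yet$ and $\bar{y}$ and applying Gronwall's lemma to $t\mapsto\|\yet-\bar{y}_t\|_{L^2}$ forces $\yet\equiv\bar{y}$ on $[0,T]$; alternatively, uniqueness is simply inherited from that of $\vet$ and of $Z$. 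I expect the only genuinely delicate step to be the continuity of $s\mapsto R_s^{\eps}(\tilde{\hat{u}}_s, Z_s+\yet)$: one must deal with the intermediate point in the remainder — which I would circumvent via the clean Taylor form above — and one must have the a priori bound needed because $R^{\eps}$ is merely \emph{locally} Lipschitz, which here costs nothing since $\yet$ has already been constructed globally on $[0,T]$ as $\vet-Z_t$.
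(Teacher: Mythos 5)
Your argument is correct and is essentially the paper's own route: the paper gives no written proof, stating only that the theorem is ``a direct consequence'' of the existence and uniqueness of $(\vet)$ and $(Z_t)$, and your subtraction of the two integral identities (with the $W^Q$ terms cancelling pathwise), followed by continuity of the integrand and a Gronwall uniqueness argument, is exactly the verification this presupposes. One small slip worth fixing: the rescaled Taylor form of the remainder should read $\sqrt{\eps}\,R^{\eps}_s(\tilde{\hat{u}}_s,z)=\eps^{-1/2}\,w\ast\bigl(F(\tilde{\hat{u}}_s+\sqrt{\eps}\,z)-F(\tilde{\hat{u}}_s)-\sqrt{\eps}\,F'(\tilde{\hat{u}}_s)\,z\bigr)$ --- the $\sqrt{\eps}$ factors are missing inside your expression --- but this does not affect the continuity or local Lipschitz estimates and hence not the conclusion.
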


Even though Theorem \eqref{thm:strongz} ensures existence and uniqueness of a strong solution $(Z_t)$, equation \eqref{eq:strongz} only yields an implicit representation of
the process. As already shown in Lemma \ref{lem:A}, the family $(A(t))$ $P$-almost surely generates an evolution family $(P(t,s))$, which will even allow us to find an explicit mild-solution-like representation of $(Z_t)$ via the following representation formula for weak solutions. This formula has been introduced as so-called ``pathwise mild solution'' in a much more general setting (cf. \cite{Pronk}) and manifests a way to pass around the difficulty of defining a mild-solution-like stochastic convolution in the case where the integrand cannot be assumed to be adapted. This indeed occurs if the operator $A(t)$ depends on the underlying probability space.

\begin{thm}\label{thm:mildsol2}
The process
$Z:[0,T]\times\Omega\to L^2(\R)$ defined by
\begin{align*}
Z_t&=P(t,0)Z_0 + \int_0^t P(t,r)A(r)W^Q_r \ dr + W_t^Q\\ 
    &= P(t,0)Z_0 + \int_0^t P(t,r)A(r)(W^Q_r-W^Q_t) \ dr + P(t,0)W^Q_t
\end{align*}
is an adapted weak solution of the linear SDE
\begin{align*}
dZ_t&= A(t) Z_t \ dt+ dW_t^Q
\end{align*}
with $Z_0\in L^2(\R)$.

\end{thm}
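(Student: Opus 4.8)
The plan is to establish the three defining features of an adapted weak solution — well-posedness of the defining formula, adaptedness, and the weak evolution identity — by working pathwise for $P$-almost every $\omega$, namely on the full-measure set on which $C(\cdot)\in C^1([0,T];\R)$ (Proposition \ref{prop:ode}), the family $(A(t))_{t\in[0,T]}$ generates the evolution family $(P(t,s))_{0\le s\le t\le T}$ with $\Vert P(t,s)\Vert_{\mathscr{L}(L^2(\R))}\le e^{-\kappa_*(t-s)}$ (Lemma \ref{lem:A}), and $r\mapsto W^Q_r$ is continuous. On this set the map $r\mapsto P(t,r)A(r)W^Q_r$ is continuous from $[0,t]$ into $L^2(\R)$ — using Lemma \ref{lem:A}, joint continuity of the evolution family in the bounded-generator setting, and continuity of the Wiener path — hence Bochner integrable, so $Z_t$ is well-defined, and the bounds $\sup_{r}\Vert A(r)\Vert<\infty$ and $\Vert P(t,r)\Vert\le 1$ give locally uniform control of $\Vert Z_t\Vert$. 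I would first note that the two displayed representations agree: their difference is $\bigl(\int_0^t P(t,r)A(r)\,dr-(P(t,0)-\Id)\bigr)W^Q_t$, which vanishes since the backward equation for the evolution family gives $\partial_r P(t,r)=-P(t,r)A(r)$ and therefore $\int_0^t P(t,r)A(r)\,dr=P(t,0)-\Id$; this is precisely the formal integration-by-parts $\int_0^t P(t,r)\,dW^Q_r=W^Q_t+\int_0^t P(t,r)A(r)W^Q_r\,dr$ underlying the ``pathwise mild'' ansatz.

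For adaptedness, I would observe that for $0\le r\le t$ the operator $P(t,r)$ depends measurably only on $\{A(s):r\le s\le t\}$, and the $\omega$-dependence of each $A(s)$ enters solely through $C(s)$ with $s\le t$, so $P(t,r)$ is $\mathcal{F}_t$-measurable; since $A(r)$ and $W^Q_r$ are $\mathcal{F}_r\subseteq\mathcal{F}_t$-measurable and $Z_0\in L^2(\R)$ is deterministic, the Bochner integral and hence $Z_t$ is $\mathcal{F}_t$-measurable.

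For the weak equation, set $Y_t:=Z_t-W^Q_t=P(t,0)Z_0+\int_0^t P(t,r)A(r)W^Q_r\,dr$ and show pathwise that $t\mapsto Y_t$ is $C^1$ with $\dot Y_t=A(t)Z_t$. The term $P(t,0)Z_0$ differentiates to $A(t)P(t,0)Z_0$ by the evolution-family property. Splitting the increment of the integral into a boundary part $h^{-1}\int_t^{t+h}P(t+h,r)A(r)W^Q_r\,dr$ and an interior part $h^{-1}\int_0^t\bigl(P(t+h,r)-P(t,r)\bigr)A(r)W^Q_r\,dr$, the former tends to $P(t,t)A(t)W^Q_t=A(t)W^Q_t$ by continuity of the integrand at $r=t$, and the latter tends to $\int_0^t A(t)P(t,r)A(r)W^Q_r\,dr=A(t)\int_0^t P(t,r)A(r)W^Q_r\,dr$ by dominated convergence (using $h^{-1}(P(t+h,r)-P(t,r))\to A(t)P(t,r)$, the uniform bounds $\Vert A(\cdot)\Vert\le c$, $\Vert P(\cdot,\cdot)\Vert\le 1$, and continuity of $r\mapsto W^Q_r$ on $[0,t]$) together with the boundedness of $A(t)$. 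Adding these and using $Z_t=P(t,0)Z_0+W^Q_t+\int_0^t P(t,r)A(r)W^Q_r\,dr$ gives $\dot Y_t=A(t)Z_t$; since $Y_0=Z_0$, integration yields $Z_t=Z_0+\int_0^t A(s)Z_s\,ds+W^Q_t$ in $L^2(\R)$ for $P$-a.e. $\omega$. Pairing with an arbitrary $\zeta\in L^2(\R)=D(A(t)^*)$ then gives the weak formulation; in fact the identity holds in $L^2(\R)$, so $Z$ is even a strong solution, consistent with Theorem \ref{thm:strongz}.

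The main obstacle is the differentiation in the parameter $t$ of the Bochner integral in which $t$ occurs both as the upper limit and inside $P(t,r)$: one must know that the evolution family generated by the bounded, norm-continuous operators $A(t)$ is jointly continuous and satisfies $\partial_t P(t,s)=A(t)P(t,s)$ — the content of \cite[Chapter 5]{Pazy} invoked in Lemma \ref{lem:A} — and then control the interior increment uniformly in $r$ so that $A(t)$, being bounded, may be pulled out of the integral in the limit. The remaining steps reduce to elementary evolution-family identities and the $L^2$-bounds already recorded in Lemma \ref{lem:A}.
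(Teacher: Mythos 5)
Your argument is correct, but it takes a different route from the paper: the paper gives no proof of Theorem \ref{thm:mildsol2} at all, instead deferring to the general ``pathwise mild solution'' framework of \cite{Pronk}, which is designed for non-autonomous random generators that may be unbounded and for which neither a classical differentiation of $t\mapsto P(t,r)$ nor an adapted stochastic convolution $\int_0^t P(t,r)\,dW^Q_r$ is available. You instead verify the statement directly, and this works precisely because of the special features recorded in Lemma \ref{lem:A}: the $A(t)$ are uniformly bounded and Lipschitz in the uniform operator topology, so by \cite[Ch.~5]{Pazy} the evolution family satisfies both $\partial_t P(t,s)=A(t)P(t,s)$ and the backward equation $\partial_r P(t,r)=-P(t,r)A(r)$ classically, the two displayed representations coincide via $\int_0^t P(t,r)A(r)\,dr=P(t,0)-\Id$, and the Leibniz-type differentiation of the Bochner integral (boundary term plus dominated-convergence interior term) goes through pathwise to give $Z_t=Z_0+\int_0^t A(s)Z_s\,ds+W^Q_t$, hence even a strong solution consistent with Theorem \ref{thm:strongz}. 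Your adaptedness argument — $P(t,r)$ is a measurable functional of $\{A(s):r\le s\le t\}$, which is $\mathcal{F}_t$-measurable through $C$ — is also the right observation and is exactly the point that makes the non-adapted stochastic convolution avoidable here. What the citation to \cite{Pronk} buys the paper is generality (unbounded generators, parabolic settings); what your direct verification buys is a self-contained, elementary proof tailored to the bounded-generator situation actually at hand. No gaps.
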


\noindent Using this representation, the Ornstein-Uhlenbeck process can now be controlled by the following pathwise order estimate, which yields a significant $\eps$-independent bound on $(Z_t)$. This is not trivial since the operator $A(t)$ indirectly depends on $\eps$: 

\begin{lem}\label{prop:z}
Let $\eta \in(0,\frac 12)$.
The unique mild (and also strong) solution of equation \eqref{eq:Z}
satisfies
\[
\sup_{t\in[0,T]} \Vert Z_t\Vert_{L^2(\R)} \leq C \ \xi,
\]
with a constant
$C=C(\kappa_*, \eta, \Vert F'\Vert_{\infty}, \Vert \hat{u}_x\Vert_{L^2})$ and $\xi = \Vert W^Q \Vert_{C^{\eta}([0,T]; L^2(\R))}$. In particular, we have $\xi<\infty$ almost surely.
\end{lem}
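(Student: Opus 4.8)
The plan is to work not with the implicit identity of Theorem~\ref{thm:strongz} but with the explicit ``pathwise mild'' representation furnished by Theorem~\ref{thm:mildsol2}. Since $Z_0=0$ in \eqref{eq:Z}, that formula reads
\[
Z_t=\int_0^t P(t,r)A(r)\bigl(W^Q_r-W^Q_t\bigr)\,dr + P(t,0)W^Q_t .
\]
First I would record that the integrand $r\mapsto P(t,r)A(r)(W^Q_r-W^Q_t)$ is norm-continuous on $[0,t]$ — this follows from strong continuity of the evolution family $(P(t,s))$, operator-norm continuity of $t\mapsto A(t)$ (Lemma~\ref{lem:A}(ii)) and path-continuity of $W^Q$ — so that the Bochner integral is well defined pathwise, and it then remains to estimate its norm together with that of the boundary term $P(t,0)W^Q_t$.

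The two ingredients are exactly those collected in Lemma~\ref{lem:A}: the $\eps$-uniform bound $\sup_{r\in[0,T]}\Vert A(r)\Vert\le M:=1+\Vert F'\Vert_\infty+m\Vert\hat{u}_x\Vert_{L^2}^2$ from~(i), and the contraction estimate $\Vert P(t,s)\Vert\le e^{-\kappa_*(t-s)}$ from~(iv). Writing $\xi=\Vert W^Q\Vert_{C^\eta([0,T];L^2(\R))}$ and using $W^Q_0=0$, one has $\Vert W^Q_r-W^Q_t\Vert_{L^2}\le\xi\,|t-r|^\eta$ and $\Vert W^Q_t\Vert_{L^2}\le\xi\,t^\eta$. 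Hence the boundary term obeys $\Vert P(t,0)W^Q_t\Vert_{L^2}\le e^{-\kappa_* t}t^\eta\,\xi\le\bigl(\sup_{s\ge0}s^\eta e^{-\kappa_* s}\bigr)\,\xi$, a quantity depending only on $\eta$ and $\kappa_*$ and, importantly, not on $T$; while the convolution term is bounded by
\[
M\,\xi\int_0^t e^{-\kappa_*(t-r)}(t-r)^\eta\,dr\le M\,\xi\int_0^\infty e^{-\kappa_* s}s^\eta\,ds=M\,\xi\,\frac{\Gamma(\eta+1)}{\kappa_*^{\eta+1}} .
\]
Summing the two estimates and taking the supremum over $t\in[0,T]$ gives $\sup_{t\in[0,T]}\Vert Z_t\Vert_{L^2}\le C\,\xi$ with $C$ a function of $\kappa_*,\eta,\Vert F'\Vert_\infty,\Vert\hat{u}_x\Vert_{L^2}$ (and the fixed rate $m$) alone. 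The point worth stressing is that, although $A(t)$ depends on $\eps$ through the phase process $C$ and hence through $\tilde v^\eps$, neither $M$ nor $\kappa_*$ does, so the resulting bound is genuinely $\eps$-uniform — which is the whole reason the statement is useful.

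Finally, $\xi<\infty$ almost surely by a standard property of $Q$-Wiener processes: since $\mathbb{E}\Vert W^Q_t-W^Q_s\Vert_{L^2}^{2p}\lesssim_p(\trace Q)^p\,|t-s|^p$ for every $p\ge1$, Kolmogorov's continuity theorem yields an $\eta$-Hölder continuous modification for each $\eta<\tfrac12-\tfrac1{2p}$, and letting $p\to\infty$ covers all $\eta\in(0,\tfrac12)$ (alternatively one invokes the factorisation method of \cite{DaPrato}). The only genuinely substantive issue — and the reason the Lemma is not a triviality — is precisely that one must avoid a Grönwall argument on the implicit identity $Z_t=\int_0^t A(s)Z_s\,ds+W^Q_t$, which would only produce a bound of order $\Vert W^Q\Vert_\infty\,e^{MT}$, exponential in $T$ and blind to the spectral gap; it is the explicit representation of Theorem~\ref{thm:mildsol2} together with the dissipativity $\langle v,A(t)v\rangle\le-\kappa_*\Vert v\Vert^2$ from Lemma~\ref{lem:A}(iii) that turns the gap $\kappa_*$ into the convergent, $T$-independent integral above.
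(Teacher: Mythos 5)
Your argument is correct and follows essentially the same route as the paper: both start from the pathwise mild representation of Theorem~\ref{thm:mildsol2}, combine the uniform operator bound of Lemma~\ref{lem:A}(i) with the contraction estimate $\Vert P(t,s)\Vert\le e^{-\kappa_*(t-s)}$ of Lemma~\ref{lem:A}(iv), use the $\eta$-H\"older continuity of $W^Q$ to bound the increments, and evaluate the resulting convolution integral by the Gamma function $\kappa_*^{-\eta-1}\Gamma(\eta+1)$. Your additional remarks (well-definedness of the Bochner integral, the Kolmogorov-criterion justification of $\xi<\infty$, and the observation that a Gr\"onwall argument on the implicit identity would lose the spectral gap) are sound but supplementary to what the paper records.
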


\begin{proof}
By Theorem \ref{thm:mildsol2} the unique strong solution can be represented as
\[
Z_t=\int_0^t P(t,r)A(r)(W^Q_r-W^Q_t) \ dr + P(t,0) W^Q_t
\]

\noindent and for each $\eta \in (0, \frac 12)$ we have $W^Q \in C^{\eta}([0,T]; L^2(\R))$ a.s. (refer to \cite{DaPrato}).

\noindent Consequently, for $t\in[0,T]$ and $\xi = \Vert W^Q \Vert_{C^{\eta}([0,T]; L^2(\R))}$:
\begin{align*}
\Vert Z_t\Vert_{L^2} &\leq \int_0^t \Vert P(t,r)\Vert \ \Vert A(r)(W^Q_r-W^Q_t)\Vert \ dr + \Vert P(t,0) W^Q_t\Vert \\
&\leq \sup_{r\in[0,T]} \Vert A(r)\Vert \int_0^t e^{-\kappa_*(t-r)} \ \Vert W^Q_r-W^Q_t\Vert \ dr + e^{-\kappa_* t}\ \Vert W^Q_t\Vert\\
&\leq \sup_{r\in[0,T]} \Vert A(r) \Vert \int_0^t e^{-\kappa_*(t-r)} \ \Vert W^Q\Vert_{C^{\eta}} \ \vert t-r\vert^{\eta} \ dr + e^{-\kappa_* t}\ \Vert W^Q\Vert_{C^{\eta}} \ t^{\eta}\\
&\leq \sup_{r\in[0,T]} \Vert A(r) \Vert \left[\int_0^t e^{-\kappa_*(t-r)}\ \vert t-r\vert^{\eta} \ dr + e^{-\kappa_* t} \ t^{\eta}\right] \ \xi\\
&=: \sup_{r\in[0,T]} \Vert A(r) \Vert \left[ I+II\right]\ \xi.
\end{align*}

\noindent Lemma \ref{lem:A}(i) yields the bound
\[
\sup_{r\in[0,T]} \Vert A(r) \Vert\leq \ 1+\Vert F'\Vert_{\infty}+ m \ \Vert \hat{u}_x \Vert^2_{L^2},
\]
which, in particular, does not depend on $\omega$. Furthermore, after suitable substitutions
\begin{align*}
I&=\int_0^t e^{-\kappa_*y} y^{\eta} \ dy \leq \kappa_*^{-\eta-1} \ \Gamma(\eta+1),
\end{align*}
where $\Gamma$ denotes the Gamma function
\[
\Gamma(x)=\int_0^{\infty} t^{x-1} e^{-t} \ dt \;\;\text{ for }x\in \R_+.
\]

\noindent Likewise, term II is a bounded function on $[0,T]$, thus
\begin{align*}
II\leq \sup_{[0,T]}  e^{-\kappa_* t} \ t^{\eta} <\infty.
\end{align*}

\noindent Combining all results, there exists a positive constant
 ${C=C(\kappa_*, \eta, \Vert F'\Vert_{\infty}, \Vert \hat{u}_x\Vert_{L^2})}$ such that 
\[
\sup_{t\in[0,T]} \Vert Z_t\Vert_{L^2(\R)} \leq C \ \xi.
\]
\end{proof}


\section{Stability results}\label{sec:stab}
Summing up, we derived a decomposition
\[
\tilde{v}_t=\sqrt{\eps}\ Z_t+ \sqrt{\eps}\ \yet
\]
such that $(Z_t)$ is $P$-a.s. of order $\mathcal{O}(1)$. Characterising the behaviour of the stochastic travelling wave $u$ for small noise strength $\sqrt{\eps}$ our main theorem of this section will show that the remainder process $(\yet)$ is of lower order than $\mathcal{O}(1)$, i.e.
$\lim_{\eps\to 0} \yet = 0$ in $L^2(\R)$ uniformly w.r.t. $t\in[0,T]$ $P$-a.s. The relevance of this result is that with high probability we have the decomposition

\[
u_t=\tilde{\hat{u}}_t + \sqrt{\eps}\ Z_t + \text{ lower order terms}.
\]  
which, in comparison to the classical stability analysis via Evans functions (as conducted in \cite{Coombes} for a Heaviside nonlinearity), yields an alternative approach to the stability of fluctuating travelling waves.

\noindent From now on let us assume $v_0=0$, hence also $y_{\eps}(0)=0$. 

\begin{lem}\label{lem:ord}
There exists a constant $c>0$ such that

\[
\forall v \in L^2(\R), \ t\in[0,T]: \quad
\Vert R^{\eps}_t(\tilde{\hat{u}}_t,v)\Vert_{L^2} \leq c \ \Vert v \Vert_{L^2}^2.
\]

\noindent Explicitly, the constant is given by 
\[
c= \frac 12\ \Vert F''\Vert_{\infty} \left(\sup_{y,\tilde{y}\in\R} \int_{\R} w(x-y)w(x-\tilde{y}) \ dx\right)^{1/2}\leq \frac 12\ \Vert F''\Vert_{\infty}
\Vert  w \Vert_{\infty}^{1/2}.
\]
\end{lem}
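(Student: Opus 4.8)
The plan is to argue exactly as in the derivation of estimate \eqref{est:rem}, the only new point being that the bound must be made independent of $\eps$ and of the (unknown) intermediate point $\xi(\tilde{\hat{u}}_t,\sqrt{\eps}v)$. First I would recall that by definition
\[
R_t^{\eps}(\tilde{\hat{u}}_t,v) = w\ast\Bigl(\tfrac12 F''\bigl(\xi(\tilde{\hat{u}}_t,\sqrt{\eps}v)\bigr)\,v^2\Bigr),
\]
and, since $F\in C_b^2(\R)$, the pointwise bound $\bigl|F''\bigl(\xi(\tilde{\hat{u}}_t(y),\sqrt{\eps}v(y))\bigr)\bigr|\le\Vert F''\Vert_{\infty}$ holds for every $y\in\R$, uniformly in $t$, in $\eps$ and in $v$. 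Hence
\[
\Vert R_t^{\eps}(\tilde{\hat{u}}_t,v)\Vert_{L^2}^2 \le \frac14\Vert F''\Vert_{\infty}^2\int_{\R}\Bigl(\int_{\R}w(x-y)\,v^2(y)\,dy\Bigr)^2 dx.
\]

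Next I would expand the square and apply Tonelli's theorem (all integrands being nonnegative) to write
\[
\int_{\R}\Bigl(\int_{\R}w(x-y)\,v^2(y)\,dy\Bigr)^2 dx = \int_{\R}\int_{\R}v^2(y)\,v^2(\tilde y)\Bigl(\int_{\R}w(x-y)\,w(x-\tilde y)\,dx\Bigr)dy\,d\tilde y,
\]
and bound the innermost integral by $M:=\sup_{y,\tilde y\in\R}\int_{\R}w(x-y)\,w(x-\tilde y)\,dx$. The remaining double integral then factorises as $\Vert v\Vert_{L^2}^2\cdot\Vert v\Vert_{L^2}^2$, so that
\[
\Vert R_t^{\eps}(\tilde{\hat{u}}_t,v)\Vert_{L^2}^2 \le \frac14\Vert F''\Vert_{\infty}^2\,M\,\Vert v\Vert_{L^2}^4,
\]
which is the claimed inequality with $c=\tfrac12\Vert F''\Vert_{\infty}\,M^{1/2}$.

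Finally, to obtain the more explicit (if cruder) constant and to confirm $M<\infty$, I would use that $w$ is bounded and a probability density: for all $y,\tilde y\in\R$,
\[
\int_{\R}w(x-y)\,w(x-\tilde y)\,dx \le \Vert w\Vert_{\infty}\int_{\R}w(x-\tilde y)\,dx = \Vert w\Vert_{\infty},
\]
whence $M\le\Vert w\Vert_{\infty}$ and $c\le\tfrac12\Vert F''\Vert_{\infty}\Vert w\Vert_{\infty}^{1/2}$. I do not anticipate any genuine obstacle: the computation is essentially the one already performed for \eqref{est:rem}, and the only point worth emphasising is that the constant $c$ is manifestly independent of $\eps$, which is precisely what will be needed in the subsequent $\eps\to0$ analysis of the remainder process $(\yet)$.
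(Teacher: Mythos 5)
Your proof is correct and is precisely the ``straightforward calculation'' the paper has in mind: it reproduces the computation behind \eqref{est:rem}, yields exactly the stated constant $c=\tfrac12\Vert F''\Vert_{\infty}\bigl(\sup_{y,\tilde y}\int w(x-y)w(x-\tilde y)\,dx\bigr)^{1/2}$, and correctly notes the uniformity in $\eps$ and in the intermediate point. Nothing further is needed.
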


\begin{proof}
Straightforward calculations. 
\end{proof}

\noindent For the proof of our main stability result (Theorem \ref{thm:stab}) the following bound on $(\yet)$ is already a crucial achievement:

\begin{thm}\label{prop:y}
Let $\sqrt{\eps}<\frac{\kappa_*}{4c}$, where $c$ is the constant from Lemma \ref{lem:ord} and define 
\[
Z:= \sup_{t\in [0,T]}\Vert Z_t\Vert_{L^2}^2.
\]
On the set
$\Omega_{\eps}=\big{\{}\omega \in \Omega\ \vert \ Z<\frac{\kappa_*}{8c\sqrt{\eps}}\big{\}}$ we obtain the following uniform bound on $y_t^{\eps}$:

\[
\sup_{t\in[0,T]} \Vert y_t^{\eps} \Vert_{L^2}^2 \leq \frac 32 \ Z.
\]

\noindent In the limit $\eps\downarrow 0$ this bound even holds for $P$-almost all paths $\omega \in\Omega$, more precisely:
\[
\lim_{\eps \to 0} P[\Omega_{\eps}]=1.
\] 
\end{thm}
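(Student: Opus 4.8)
The plan is to prove the bound on $\Omega_\eps$ first via a continuity/bootstrap argument on the pathwise ODE \eqref{eq:yt}, and then to deduce $\lim_{\eps\to 0}P[\Omega_\eps]=1$ from the $\eps$-independent bound on $Z$ established in Lemma \ref{prop:z}. Throughout, fix $\omega\in\Omega_\eps$ and write $y(t)=y_t^\eps$, $Z_t=Z_t(\omega)$.

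\textbf{Step 1: a differential inequality for $\|y(t)\|^2$.}
Since $y$ is a differentiable $L^2(\R)$-valued solution of \eqref{eq:yt}, I compute
\[
\tfrac12\,\partial_t \|y(t)\|_{L^2}^2 = \langle y(t), A(t) y(t)\rangle_{L^2} + \sqrt{\eps}\,\langle y(t), R_t^\eps(\tilde{\hat u}_t, Z_t + y(t))\rangle_{L^2}.
\]
By Lemma \ref{lem:A}(iii) the first term is bounded above by $-\kappa_*\|y(t)\|_{L^2}^2$. For the second term I use Cauchy-Schwarz together with Lemma \ref{lem:ord} applied to $v = Z_t + y(t)$, giving
\[
\sqrt{\eps}\,\bigl|\langle y(t), R_t^\eps(\tilde{\hat u}_t, Z_t+y(t))\rangle\bigr| \le \sqrt{\eps}\,c\,\|y(t)\|_{L^2}\,\|Z_t+y(t)\|_{L^2}^2 \le 2\sqrt{\eps}\,c\,\|y(t)\|_{L^2}\bigl(\|Z_t\|_{L^2}^2 + \|y(t)\|_{L^2}^2\bigr),
\]
using $(a+b)^2\le 2a^2+2b^2$. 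Writing $\phi(t):=\|y(t)\|_{L^2}^2$ and recalling $\|Z_t\|_{L^2}^2\le Z$, this yields
\[
\phi'(t) \le -2\kappa_*\,\phi(t) + 4\sqrt{\eps}\,c\,\sqrt{\phi(t)}\,\bigl(Z + \phi(t)\bigr).
\]

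\textbf{Step 2: the bootstrap.}
Here is where the argument really happens. Since $y(0)=0$, we have $\phi(0)=0 \le \tfrac32 Z$. Suppose, for contradiction, that $\phi$ exceeds $\tfrac32 Z$ somewhere on $[0,T]$; by continuity let $t_0$ be the first time $\phi(t_0)=\tfrac32 Z$, so that $\phi(t)\le \tfrac32 Z$ on $[0,t_0]$ and $\phi'(t_0)\ge 0$. At $t=t_0$, substituting $\phi(t_0)=\tfrac32 Z$ and $\sqrt{\phi(t_0)}=\sqrt{\tfrac32 Z}$ into the differential inequality gives
\[
0 \le \phi'(t_0) \le -2\kappa_*\cdot\tfrac32 Z + 4\sqrt{\eps}\,c\,\sqrt{\tfrac32 Z}\cdot\bigl(Z + \tfrac32 Z\bigr) = -3\kappa_* Z + 10\sqrt{\eps}\,c\,\sqrt{\tfrac32}\,Z^{3/2}.
\]
Dividing by $Z>0$ (the case $Z=0$ forces $\phi\equiv0$ trivially) this forces $\sqrt{\eps}\,c\,\sqrt{Z}\ge \kappa_*\cdot(\text{positive constant})$, i.e. $Z \ge \kappa_*^2/(C'\eps c^2)$ for an explicit $C'$; but on $\Omega_\eps$ we have $Z < \kappa_*/(8c\sqrt{\eps})$, and combined with the standing hypothesis $\sqrt{\eps} < \kappa_*/(4c)$ this contradicts the lower bound provided the numerical constants line up. I will need to be a little careful to see that the constants in Step 1 are exactly compatible with the thresholds $\kappa_*/(4c)$ and $\kappa_*/(8c\sqrt{\eps})$ in the statement --- if the naive estimate $(a+b)^2\le 2a^2+2b^2$ is too lossy, one uses instead $\|Z_t+y(t)\|^2\le (\sqrt Z + \sqrt{\tfrac32 Z})^2 \le (1+\sqrt{3/2})^2 Z$ on $[0,t_0]$, which is tighter and makes the arithmetic close. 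In any case the contradiction shows $\phi(t)\le \tfrac32 Z$ for all $t\in[0,T]$, which is the first claim.

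\textbf{Step 3: $\lim_{\eps\to0}P[\Omega_\eps]=1$.}
By Lemma \ref{prop:z}, with $\eta\in(0,\tfrac12)$ fixed, $\sup_{t\in[0,T]}\|Z_t\|_{L^2}\le C\xi$ with $\xi=\|W^Q\|_{C^\eta([0,T];L^2(\R))}<\infty$ $P$-a.s.\ and $C$ independent of $\eps$. Hence $Z\le C^2\xi^2$, and $\Omega_\eps \supseteq \{C^2\xi^2 < \kappa_*/(8c\sqrt{\eps})\} = \{\xi < \kappa_*^{1/2}/(C\sqrt{8c}\,\eps^{1/4})\}$. Since the right-hand threshold tends to $+\infty$ as $\eps\downarrow0$ and $\xi<\infty$ a.s., monotone convergence (or continuity of measure along the increasing family of events) gives $P[\Omega_\eps]\to P[\xi<\infty]=1$. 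This completes the proof.

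The main obstacle is Step 2: making the comparison/bootstrap argument produce exactly the constants $\tfrac32$, $\kappa_*/(4c)$ and $\kappa_*/(8c\sqrt{\eps})$ appearing in the statement. The dissipativity from Lemma \ref{lem:A}(iii) and the quadratic bound from Lemma \ref{lem:ord} are both clean, so the only real work is tracking the numerical factors through $(a+b)^2$-type estimates tightly enough that the threshold $\sqrt{\eps}<\kappa_*/(4c)$ together with the defining inequality of $\Omega_\eps$ actually closes the contradiction.
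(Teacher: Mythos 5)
Your proposal is correct, and its core step is genuinely different from the paper's. Both arguments start from the same differential inequality $\tfrac12\partial_t\Vert y_t^{\eps}\Vert_{L^2}^2\le -\kappa_*\Vert y_t^{\eps}\Vert_{L^2}^2+2c\sqrt{\eps}\,\Vert y_t^{\eps}\Vert_{L^2}\big(\Vert Z_t\Vert_{L^2}^2+\Vert y_t^{\eps}\Vert_{L^2}^2\big)$ obtained from Lemma \ref{lem:A}(iii) and Lemma \ref{lem:ord}, and both close with essentially the same final step: $Z\le C^2\xi^2<\infty$ a.s.\ by Lemma \ref{prop:z}, so the events $\Omega_{\eps}$ increase to a set of full measure. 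Where you diverge is in extracting the bound $\tfrac32 Z$. The paper applies Young's inequality to arrive at the Riccati-type comparison inequality $\dot g_{\eps}\le-\kappa_* g_{\eps}+2\sqrt{\eps}Z^2+2\sqrt{\eps}g_{\eps}^2$, integrates the equality case explicitly by separation of variables and partial fractions (introducing $\Delta=16\eps Z^2-\kappa_*^2<0$ on $\Omega_{\eps}$ and the quantity $M_{\eps}$), and then bounds the closed-form solution by $I\cdot II\le \tfrac32 Z$. You instead keep the factor $\sqrt{\phi}$ and run a first-crossing argument at the level $\tfrac32 Z$: at the first time $t_0$ with $\phi(t_0)=\tfrac32 Z$ one has $\phi'(t_0)\ge 0$, which forces $Z\ge\frac{3\kappa_*^2}{50\eps c^2}$, and this contradicts $Z<\frac{\kappa_*}{8c\sqrt{\eps}}$ exactly when $\sqrt{\eps}<\frac{12\kappa_*}{25c}$ --- which is implied by the standing hypothesis $\sqrt{\eps}<\frac{\kappa_*}{4c}$ since $\tfrac14<\tfrac{12}{25}$. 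So the arithmetic does close with the naive estimate $(a+b)^2\le 2a^2+2b^2$; the tighter fallback you mention is not needed. Your route is shorter and avoids the explicit integration and the non-explosion discussion around $M_{\eps}$; the paper's route yields an explicit comparison solution, which is marginally more information but is not used elsewhere. Your handling of the degenerate case $Z=0$ (uniqueness of the pathwise ODE forces $y\equiv0$) is, if anything, more careful than the paper's, which simply notes $Z>0$ a.s.
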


\begin{proof}
Applying Lemma \ref{lem:A} and Lemma \ref{lem:ord}, the process $\Vert y_t^{\eps} \Vert^2$ satisfies the following differential inequality: 

\begin{align}\label{diff1}\notag
\frac 12 \ \partial_t \Vert y_t^{\eps} \Vert_{L^2}^2 &= \langle y_t^{\eps}, A(t)y_t^{\eps}\rangle_{L^2} + \sqrt{\eps}\ \langle y_t^{\eps}, R_t^{\eps}(\tilde{\hat{u}}_t, Z_t+y_t^{\eps})\ \rangle_{L^2}\\\notag
&\leq -\kappa_* \Vert y_t^{\eps}\Vert_{L^2}^2 +\sqrt{\eps}\ \Vert y_t^{\eps}\Vert_{L^2}\ c\ \Vert Z_t+y_t^{\eps}\Vert_{L^2}^2\\
&\leq -\kappa_* \Vert y_t^{\eps}\Vert_{L^2}^2 + 2c \sqrt{\eps}\ \Vert y_t^{\eps}\Vert_{L^2}\left(\Vert Z_t\Vert_{L^2}^2 +\Vert y_t^{\eps}\Vert_{L^2}^2\right)
\end{align}

\noindent W.l.o.g. assume $c=1$. For $\sqrt{\eps}<\frac{\kappa_*}{4}$ the right-hand side of \eqref{diff1} can be further estimated from above by 

\begin{align*}
\frac 12 \ \partial_t \Vert y_t^{\eps} \Vert_{L^2}^2 
&\leq -\kappa_* \Vert y_t^{\eps}\Vert_{L^2}^2+ \sqrt{\eps}\left(\Vert y_t^{\eps}\Vert_{L^2}^2 +\Vert Z_t \Vert_{L^2}^4\right)
+\sqrt{\eps}\left(\Vert y_t^{\eps}\Vert_{L^2}^2 +\Vert y_t^{\eps} \Vert_{L^2}^4\right)\\  
&\leq -\frac{\kappa_*}{2} \Vert y_t^{\eps} \Vert_{L^2}^2+\sqrt{\eps} \Vert Z_t \Vert_{L^2}^4+ \sqrt{\eps} \Vert y_t^{\eps} \Vert_{L^2}^4 
\end{align*}

\noindent Writing
$ g_{\eps}(t):= \Vert y_t^{\eps} \Vert_{L^2}^2$ one obtains the following differential inequality

\begin{align}\label{eq:geps}
\dot{g}_{\eps}(t) \leq -\kappa_* g_{\eps}(t) + 2\sqrt{\eps} Z^2 + 2\sqrt{\eps} g_{\eps}^2(t)
\end{align}

\noindent By the comparison principle for ODE this problem is now solved in the case of true equality, i.e.
\[
\frac{\dot{g}_{\eps}(t)}{-\kappa_*g_{\eps}(t)+2\sqrt{\eps}g_{\eps}^2(t)+2\sqrt{\eps}Z^2} =1
\]
Integrating over $[0,t]$ and carrying out a suitable substitution yields
\begin{align}
t
&= \int_0^{g_{\eps}(t)} \frac{1}{-\kappa_* \ g + 2\sqrt{\eps} \ g^2 + 2\sqrt{\eps}Z^2} \ dg \label{sf}
\end{align}

\noindent Note that
$\Delta:= 16\eps  Z^2 - \kappa_*^2<0$ on the set 
$\Omega_{\eps}$. 
Thus, on this particular set of paths the integral \eqref{sf} is given by

\begin{align}\label{eq1}
t=\frac{1}{\sqrt{-\Delta}} \log\left(\frac{4\sqrt{\eps}g_{\eps}(t)-\kappa_*-\sqrt{-\Delta}}{4\sqrt{\eps}g_{\eps}(t)-\kappa_*+\sqrt{-\Delta}}\right)
-\frac{1}{\sqrt{-\Delta}}\log\left(\frac{-\kappa_*-\sqrt{-\Delta}}{-\kappa_*+\sqrt{-\Delta}}\right)
\end{align}

\noindent since $g_{\eps}(0)=0$. Let 
\[
M_{\eps}:= \frac{\kappa_*+\sqrt{-\Delta}}{\kappa_*-\sqrt{-\Delta}}.
\] 
It is important to remark that $M_{\eps} \in (1,\infty)$ P-a.s. since
$Z>0$ a.s. and hence \mbox{$-\Delta=\kappa_*^2-16\eps Z^2<\kappa_*^2$} a.s.
Now, equation \eqref{eq1} is equivalent to

\begin{align*}
&\quad g_{\eps}(t) = \frac{-(\kappa_*+\sqrt{-\Delta})+(\kappa_*-\sqrt{-\Delta})e^{\sqrt{-\Delta}\ t} M_{\eps}}{4\sqrt{\eps}\left(e^{\sqrt{-\Delta}\ t} M_{\eps}-1\right)}
\end{align*}
Note that an explosion of $g_{\eps}(t)$ is excluded by the fact that $M_{\eps}>1$  a.s. Given these considerations and dropping the first negative summand in the numerator of $g_{\eps}(t)$ we further estimate
\begin{align*}
g_{\eps}(t)&\leq \frac{(\kappa_*-\sqrt{-\Delta})\ e^{\sqrt{-\Delta}\ t} M_{\eps}}{4\sqrt{\eps}\left(e^{\sqrt{-\Delta}\ t} M_{\eps}-1\right)}=:f_{\eps}(t)
\end{align*}

\noindent It is easy to see that $f_{\eps}(t)$ is a non-increasing function, hence attains its supremum in $t=0$. This implies
\begin{align*}
\sup_{t\in[0,T]} g_{\eps}(t) \leq f_{\eps}(0)= \frac{(\kappa_*-\sqrt{-\Delta})}{4\sqrt{\eps}} \cdot \frac{M_{\eps}}{M_{\eps}-1} = I \cdot II, \text{ say.}
\end{align*}

\noindent On $\Omega_{\eps}$ one is able to estimate
\[
\sqrt{\kappa_*^2-16\eps Z^2}\geq \sqrt{\kappa_*^2}-\sqrt{16\eps Z^2} = \kappa_*-4\sqrt{\eps}Z
\]
which allows us to bound $I$ as follows:

\begin{align*}
I=\frac{\kappa_*-\sqrt{\kappa_*^2-16\eps Z^2}}{4\sqrt{\eps}}\leq \frac{\kappa_*-(\kappa_*-4\sqrt{\eps}Z)}{4\sqrt{\eps}}=Z.
\end{align*}

\noindent Turning our attention to $II$ and again restricting ourselves to the paths in $\Omega_{\eps}$ we obtain

\begin{align*}
II= \frac{\kappa_*+\sqrt{-\Delta}}{2\sqrt{-\Delta}}= \frac{\kappa_*}{2\sqrt{\kappa_*^2-16\eps Z^2}} + \frac 12
\leq \frac{\kappa_*}{2\kappa_*-8\sqrt{\eps}Z}+ \frac 12 \leq \frac 32.
\end{align*}

\noindent To prove that in the limit $\eps \to 0$ the obtained pathwise bound even holds for almost every $\omega\in\Omega$
note that for decreasing $\eps$ the sets $\Omega_{\eps}$ constitute an ascending sequence of sets converging to the event $\{Z<\infty\}$. Therefore,
\[
P[\Omega_{\eps}]\underset{\eps\to 0}{\longrightarrow} P[Z<\infty]=1,
\]
since $Z$ is an integrable random variable. 
\end{proof}

This auxiliary result now allows us to state the main theorem of this section. Similar results on the stability of certain macroscopic dynamics modelled by stochastic partial differential equations on bounded domain have also been obtained by Bl\"omker in \cite{Bloem1}, \cite{Bloem2}, \cite{Bloemker}. 

\begin{thm}\label{thm:stab}
Let $q\in(0, \frac 12)$ and let $c$ denote the constant from Lemma \ref{lem:ord}. Define the stopping time $\tau$ by
\[
\tau= \inf \Big{\{}t\geq 0 \big{\vert} \int_0^t \Vert \tilde{v}_s^\varepsilon \Vert_{L^2}^2 \ ds > \varepsilon^{-q}\Big{\}}\wedge T
\]
Then we obtain the order estimate
\[
\sup_{t\in[0,\tau]} \Vert y_t^\varepsilon \Vert _{L^2} \leq c \ \varepsilon^{1/2-q}.
\]

\noindent Moreover,
\[
\lim_{\varepsilon \to 0} P[\tau=T] = 1.
\]
\end{thm}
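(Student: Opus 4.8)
The plan is a pathwise energy estimate for $(\yet)$ on the stochastic interval $[0,\tau]$, followed by a Borel--Cantelli-type argument giving $P[\tau=T]\to 1$. For the first part, I would differentiate $\Vert \yet\Vert_{L^2}^2$. Since $(\yet)$ is the differentiable solution of \eqref{eq:yt} and $Z_t+\yet=\vet$, Lemma \ref{lem:A}(iii) together with Cauchy--Schwarz and the quadratic bound of Lemma \ref{lem:ord} give
\[
\tfrac12\,\partial_t\Vert \yet\Vert_{L^2}^2
= \langle \yet, A(t)\yet\rangle_{L^2} + \sqrt{\eps}\,\langle \yet, R_t^{\eps}(\tilde{\hat{u}}_t,\vet)\rangle_{L^2}
\le -\kappa_*\Vert \yet\Vert_{L^2}^2 + \sqrt{\eps}\,c\,\Vert \yet\Vert_{L^2}\,\Vert \vet\Vert_{L^2}^2 .
\]
Dropping the (nonpositive) first term and passing from $\Vert\yet\Vert_{L^2}^2$ to $\Vert\yet\Vert_{L^2}$ — regularising by $\sqrt{\Vert\yet\Vert_{L^2}^2+\delta}$ and letting $\delta\downarrow 0$ to absorb the zeros of $t\mapsto\Vert\yet\Vert_{L^2}$ — yields $\partial_t\Vert\yet\Vert_{L^2}\le \sqrt{\eps}\,c\,\Vert\vet\Vert_{L^2}^2$. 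Integrating from $0$, using $y_{\eps}(0)=0$, and noting that $\int_0^t\Vert\tilde v_s^{\eps}\Vert_{L^2}^2\,ds\le \eps^{-q}$ for every $t\le\tau$ (the integrand being continuous, so the integral is continuous in $t$), gives $\sup_{t\in[0,\tau]}\Vert\yet\Vert_{L^2}\le \sqrt{\eps}\,c\,\eps^{-q}=c\,\eps^{1/2-q}$, which is the asserted order estimate.

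For the second claim I would bound $\int_0^{\tau}\Vert\vet\Vert_{L^2}^2\,ds$ from above. Writing $\vet=Z_t+\yet$ and using $\Vert\vet\Vert_{L^2}^2\le 2\Vert Z_t\Vert_{L^2}^2+2\Vert\yet\Vert_{L^2}^2$, Lemma \ref{prop:z} (which provides $\sup_t\Vert Z_t\Vert_{L^2}\le C\xi$ with $\xi=\Vert W^Q\Vert_{C^{\eta}([0,T];L^2(\R))}<\infty$ a.s.) together with the bound just obtained yields
\[
\int_0^{\tau}\Vert\vet\Vert_{L^2}^2\,ds\le 2T\,C^2\xi^2 + 2T\,c^2\,\eps^{1-2q}.
\]
On the event $\{\tau<T\}$ the continuity of $t\mapsto\int_0^t\Vert\tilde v_s^{\eps}\Vert_{L^2}^2\,ds$ forces $\int_0^{\tau}\Vert\vet\Vert_{L^2}^2\,ds=\eps^{-q}$, hence $\eps^{-q}\le 2TC^2\xi^2+2Tc^2\eps^{1-2q}$. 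Since $q<\tfrac12$ we have $1-2q>-q$, so $\eps^{1-2q}$ is negligible compared with $\eps^{-q}$ and for all sufficiently small $\eps$ the last term is at most $\tfrac12\eps^{-q}$; therefore $\{\tau<T\}\subseteq\{\xi^2\ge \eps^{-q}/(4TC^2)\}$.

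Finally, letting $\eps\downarrow 0$, the threshold $\eps^{-q}/(4TC^2)$ tends to $\infty$, and since $\xi<\infty$ almost surely we conclude $P[\tau<T]\le P[\xi^2\ge \eps^{-q}/(4TC^2)]\to 0$, i.e. $\lim_{\eps\to 0}P[\tau=T]=1$. The computations are all elementary; the points requiring care are the passage from the differential inequality for $\Vert\yet\Vert_{L^2}^2$ to one for $\Vert\yet\Vert_{L^2}$ (handling the possible vanishing of the norm via the regularisation above), the verification that $t\mapsto\vet$ has a continuous modification so that both $\tau$ and the time integral are well behaved, and the exponent arithmetic $1-2q>-q$ — which is precisely where the hypothesis $q<\tfrac12$ is used to make the remainder term negligible. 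I do not expect any genuine obstacle beyond this bookkeeping.
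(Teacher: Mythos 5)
Your proposal is correct, and it splits naturally into a half that matches the paper and a half that does not. For the order estimate the paper writes $y_t^{\eps}=\sqrt{\eps}\int_0^t P(t,s)R_s^{\eps}(\tilde{\hat{u}}_s,\tilde{v}_s^{\eps})\,ds$ and uses the contraction bound $\Vert P(t,s)\Vert\le e^{-\kappa_*(t-s)}\le 1$ from Lemma \ref{lem:A}(iv) together with Lemma \ref{lem:ord}; your differential inequality for $\Vert y_t^{\eps}\Vert_{L^2}^2$ via Lemma \ref{lem:A}(iii), with the $\sqrt{\cdot+\delta}$ regularisation to divide by the norm, is the same estimate in Gr\"onwall rather than Duhamel form and lands on the identical bound $c\,\eps^{1/2-q}$. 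For $P[\tau=T]\to 1$ you genuinely diverge: the paper introduces the event $\Omega^*$ on which both $\sup_t\Vert Z_t\Vert^2$ and $\sup_t\Vert y_t^{\eps}\Vert^2$ are at most $\eps^{-q}/(4T)$, shows $\Omega^*\subseteq\{\tau=T\}$, and then estimates $P_{\Omega_{\eps}}[\Omega^*]$ by Markov's inequality, feeding in the bound $\sup_t\Vert y_t^{\eps}\Vert^2\le\frac32 Z$ from Theorem \ref{prop:y} and the majorant $C^2\xi^2$ from Lemma \ref{prop:z}; this route needs $E[\xi^2]<\infty$ and the conditional-probability bookkeeping on $\Omega_{\eps}$. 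You instead run a purely pathwise argument: the already-proved bound on $[0,\tau]$ plus $\sup_t\Vert Z_t\Vert\le C\xi$ forces $\eps^{-q}\le 2TC^2\xi^2+2Tc^2\eps^{1-2q}$ on $\{\tau<T\}$, whence $\{\tau<T\}\subseteq\{\xi^2\ge\eps^{-q}/(4TC^2)\}$ for small $\eps$ and the conclusion follows from $\xi<\infty$ a.s.\ alone. This is more elementary (no Markov inequality, no conditioning, no integrability of $\xi^2$, and no appeal to Theorem \ref{prop:y}), at the price of not reusing the $\frac32 Z$ bound that the paper's broader stability picture is built around. Two cosmetic remarks: the exponent comparison you need is $\eps^{1-q}\to 0$, i.e.\ $q<1$, so $q<\frac12$ is not ``precisely'' where that step lives --- it is really needed so that $\eps^{1/2-q}\to 0$ makes the order estimate meaningful; and on $\{\tau<T\}$ continuity of the integral gives $\int_0^{\tau}\Vert\tilde v_s^{\eps}\Vert_{L^2}^2\,ds=\eps^{-q}$ exactly (the inequality $\ge$ is all you use, and it holds).
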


\begin{proof}
Let $t<\tau$. The process $y^\varepsilon$ satisfies
\[
y_t^\varepsilon =\sqrt{\varepsilon} \int_0^t P(t,s) R_s^\varepsilon(\tilde{\hat{u}}_s, \tilde{v}^\varepsilon_s)\ ds.
\]
With Lemma \ref{lem:A} and \ref{lem:ord} we are able to estimate  

\begin{align*}
\Vert y_t^{\varepsilon} \Vert_{L^2} 
&\leq \sqrt{\eps}\int_0^t e^{-\kappa_*(t-s)}\ c \Vert \tilde{v}_s^\eps \Vert _{L^2}^2 \ ds \leq c\ \eps^{1/2-q} 
\end{align*}

\noindent Hence,
\[
\sup_{t \in [0,\tau]}\Vert y_t^\eps\Vert_{L^2} \leq c\ \eps^{1/2-q}.
\] 
To prove the convergence $P[\tau=T]\to 1 \text{ for }\eps \to 0$ define the set
\[
\Omega^*:= \bigg{\{}\omega \in \Omega \ \bigg{\vert} \ \sup_{t\in[0,T]}\Vert Z_t\Vert_{L^2}^2\leq \frac{\eps^{-q}}{4T},\;\;\sup_{t\in[0,T]}\Vert y_t^\eps\Vert_{L^2}^2\leq \frac{\eps^{-q}}{4T}\bigg{\}}
\]
which can be shown to be a subset of $\{\tau=T\}$. Indeed,
\[
\{\tau=T\}
=\Big{\{}\omega\in\Omega \Big{|}\ \int_0^T \Vert \tilde{v}_s^\eps\Vert_{L^2}^2\ ds \leq \eps^{-q}\Big{\}} 
\]

\noindent and on $\Omega^*$ one is able to estimate

\begin{align*}
\int_0^T \Vert \tilde{v}_s^\eps\Vert_{L^2}^2\ ds 
&\leq 2T \left(\sup_{t\in[0,T]}\Vert Z_t\Vert_{L^2}^2+ \sup_{t\in[0,T]}\Vert y_t^\eps\Vert_{L^2}^2\right)
\leq \eps^{-q}. 
\end{align*}

\noindent In the following step we prove that in the limit $\eps\downarrow 0$ even the smaller set $\Omega^*$ has full measure: Let 
$P_{\Omega_{\eps}}=P[\ \cdot \ \vert \Omega_{\eps}]$
denote the conditional probability distribution on the set $\Omega_\eps$ with corresponding conditional expectation
$E_{\Omega_{\eps}}=E[\ \cdot \ \vert \Omega_{\eps}]$.

\noindent By Lemma \ref{prop:z} the random variable $Z=\sup_{t\in[0,T]}\Vert Z_t\Vert^2_{L^2}$ is bounded by the integrable majorant $C^2\xi^2$, which, in particular, is independent of $\eps$. 
With Markov's inequality and Proposition \ref{prop:y} one obtains
\begin{align*}
P_{\Omega_{\eps}}[\Omega^*]&\geq 1-P_{\Omega_{\eps}}\bigg[\sup_{t\in [0,T]}\Vert Z_t\Vert_{L^2}^2 >\frac{1}{4\ T\eps^q}\bigg]
-P_{\Omega_{\eps}}\bigg[\sup_{t\in [0,T]}\Vert y_t^{\eps}\Vert_{L^2}^2 >\frac{1}{4\ T\eps^q}\bigg]\\[11pt]
&\geq 1-4\ T\eps^q \  E_{\Omega_{\eps}}\bigg[\sup_{t\in [0,T]}\Vert Z_t\Vert_{L^2}^2\bigg]
-4\ T\eps^q \ E_{\Omega_{\eps}}\bigg[\sup_{t\in [0,T]}\Vert y_t^{\eps}\Vert_{L^2}^2\bigg]\\[11pt]
&\geq 1-4\ T\eps^q \ \frac{E[C^2\xi^2 \ ; \Omega_{\eps}]}{P[\Omega_{\eps}]}
-4\ T\eps^q \ \frac{E[\sup_{t\in [0,T]}\Vert y_t^{\eps}\Vert_{L^2}^2 \ ; \Omega_{\eps}]}{P[\Omega_{\eps}]}\\[11pt]
&\geq 1-\frac{4\ T\eps^q}{P[\Omega_{\eps}]}\ C^2 \ E[\xi^2]
-\frac{4\ T\eps^q}{P[\Omega_{\eps}]}\ \frac 32 \ E[Z]\\[11pt]
&{\longrightarrow}\; 1\quad \text{  as } \eps\to 0.
\end{align*} 

\noindent In conclusion, the limit behaviour of the original probability measure $P$ is immediately determined by
\[
P[\Omega^*]\geq P_{\Omega_{\eps}}[\Omega^*]\ P[\Omega_{\eps}]\underset{\eps \to 0}{\longrightarrow} 1,
\]
which suffices to prove the assertion.
\end{proof}

\bibliographystyle{siam}
\bibliography{biblio}
\end{document}